\documentclass[12pt]{amsart}
\usepackage{amssymb,amsmath,amsthm}
\usepackage{tikz}
\usetikzlibrary{arrows}
\usepackage[colorlinks=true,urlcolor=blue,linkcolor=blue,citecolor=blue]{hyperref}

\makeatletter
\pgfarrowsdeclare{stealthp}{stealthp}
{
  \pgfutil@tempdima=0.28pt%
  \advance\pgfutil@tempdima by.3\pgflinewidth%
  \pgfutil@tempdimb=6\pgfutil@tempdima\advance\pgfutil@tempdimb by.5\pgflinewidth%
  \pgfarrowsleftextend{+-\pgfutil@tempdimb}
  \pgfutil@tempdimb=2\pgfutil@tempdima\advance\pgfutil@tempdimb by0.5\pgflinewidth%
  \pgfarrowsrightextend{+\pgfutil@tempdimb}
}
{
  \pgfutil@tempdima=0.56pt%
  \advance\pgfutil@tempdima by.3\pgflinewidth%
  \pgfsetdash{}{+0pt}
  \pgfsetroundjoin
  \pgfpathmoveto{\pgfqpoint{2\pgfutil@tempdima}{0\pgfutil@tempdima}}
  \pgfpathcurveto
  {\pgfqpoint{-.5\pgfutil@tempdima}{.5\pgfutil@tempdima}}
  {\pgfqpoint{-3\pgfutil@tempdima}{1.5\pgfutil@tempdima}}
  {\pgfqpoint{-6\pgfutil@tempdima}{3.25\pgfutil@tempdima}}
  \pgfpathcurveto
  {\pgfqpoint{-3\pgfutil@tempdima}{1\pgfutil@tempdima}}
  {\pgfqpoint{-3\pgfutil@tempdima}{-1\pgfutil@tempdima}}
  {\pgfqpoint{-6\pgfutil@tempdima}{-3.25\pgfutil@tempdima}}
  \pgfpathcurveto
  {\pgfqpoint{-3\pgfutil@tempdima}{-1.5\pgfutil@tempdima}}
  {\pgfqpoint{-.5\pgfutil@tempdima}{-.5\pgfutil@tempdima}}
  {\pgfqpoint{2\pgfutil@tempdima}{0\pgfutil@tempdima}}
  \pgfpathclose
  \pgfusepathqfillstroke
}
\makeatother


\addtolength{\hoffset}{-2cm}
\addtolength{\textwidth}{4cm}
\addtolength{\voffset}{-1cm}
\addtolength{\textheight}{1.5cm}

\theoremstyle{plain}
\newtheorem{thm}{Theorem}[section]
\newtheorem{lem}[thm]{Lemma}
\newtheorem{conj}[thm]{Conjecture}
\newtheorem{cor}[thm]{Corollary}
\newtheorem{obs}[thm]{Observation}
\newtheorem{prop}[thm]{Proposition}
\newtheorem{question}[thm]{Question}
\theoremstyle{definition}
\newtheorem{defn}[thm]{Definition}

\newcommand{\minprol}{\textnormal{\textsf{minprol}}}

\newcommand{\bm}[1]{\mbox{\boldmath $#1$}}

\newcounter{todocounter}

\newcommand{\DDD}{\mathcal{D}}
\newcommand{\PPP}{\mathcal{P}}
\newcommand{\TTT}{\mathcal{T}}

\newcommand{\bbR}{\mathbb{R}}

\newcommand{\leqs}{\leqslant}
\newcommand{\geqs}{\geqslant}
\newcommand{\veps}{\varepsilon}
\newcommand{\ceil}[1]{\left\lceil #1 \right\rceil}
\newcommand{\floor}[1]{\left\lfloor #1 \right\rfloor}
\newcommand{\varempty}{\scalebox{1.2}{$\varnothing$}}

\newcommand{\perm}{\sigma}
\newcommand{\permb}{\tau}
\newcommand{\pat}{\pi}
\newcommand{\dw}{d_\perm}
\newcommand{\dwk}{d_{\perm_k}}
\newcommand{\dwp}{d_{\wdel{p}}}
\newcommand{\br}{\textnormal{\textsf{\textup{br}}}}
\newcommand{\wdel}[1]{\perm_{\langle #1 \rangle}}
\newcommand{\nred}{\textnormal{\textsf{\textup{nred}}}}
\newcommand{\nblue}{\textnormal{\textsf{\textup{nblue}}}}
\newcommand{\discrep}{\delta}
\newcommand{\gridlattice}{\Gamma}

\tikzset{help lines/.style={color=gray!67,very thin}}

\newcommand{\plotptradius}{0.275}
\newcommand{\setplotptradius}[1]{\renewcommand{\plotptradius}{#1}}

\newcommand{\plotpt}[3][] 
{ \fill[#1,radius=\plotptradius] (#2,#3) circle; }

\newcommand{\plotpermnobox}[3][]  
{
  \foreach \y [count=\x] in {#3}
  {
    \ifnum0=\y {} \else {
      \plotpt[#1]{\x}{\y}
    } \fi
  }
}

\title[Prolific permutations and permuted packings]{Prolific permutations and permuted packings: \\
downsets containing many large patterns}

\author[D. Bevan]{David Bevan}
\address{Department of Computer and Information Sciences, University of Strathclyde, Glasgow G1~1XH Scotland}
\email{david.bevan@strath.ac.uk}

\author[C. Homberger]{Cheyne Homberger}
\address{Department of Mathematics and Statistics, University of Maryland, Baltimore County, Baltimore, MD 21250}
\email{cheyneh@umbc.edu}

\author[B. E. Tenner]{Bridget Eileen Tenner$^{\dagger}$}
\address{Department of Mathematical Sciences, DePaul University, Chicago, IL 60614}
\email{bridget@math.depaul.edu}
\thanks{$^{\dagger}$Research partially supported by a Simons Foundation Collaboration Grant for Mathematicians and by a DePaul University Faculty Summer Research Grant.}

\subjclass[2010]{Primary: 05A05; Secondary: 05B40, 06A07}

\begin{document}

\begin{abstract}

A permutation of $n$ letters is $k$-prolific if each $(n-k)$-subset of the letters in its one-line notation forms a unique pattern.  We present a complete characterization of $k$-prolific permutations for each $k$, proving that $k$-prolific permutations of $m$ letters exist for every $m \geqslant k^2/2+2k+1$, and that none exist of smaller size.  Key to these results is a natural bijection between $k$-prolific permutations and certain ``permuted'' packings of diamonds.

\noindent \\ \emph{Keywords}: permutation, pattern, pattern poset, downset, prolific permutation, packing, permuted packing
\end{abstract}

\maketitle

\section{Introduction}

The set of permutations of $[n] = \{1,2,\ldots, n\}$ is denoted $S_n$. We write a permutation $\perm\in S_n$ as a word over $[n]$ in one-line notation, $\perm = \perm(1)\perm(2)\cdots \perm(n)$, and say that such a permutation $\perm$ has \emph{size} $n$.
If $\pat_1$ and $\pat_2$ are words of the same size over $\mathbb{R}$, then we write $\pat_1\approx\pat_2$ to denote that their letters appear in the same relative order.
This prompts the classical notion of pattern containment.

\begin{defn}
Consider $\pat \in S_r$. A permutation $\perm \in S_n$ \emph{contains} the \emph{pattern} $\pat$ if there are indices $1 \leqs i_1 < \cdots < i_r \leqs n$ such that $\perm(i_1)\cdots \perm(i_r) \approx \pat$.
If $\perm$ contains $\pat$, we write $\pat \preceq \perm$.
If $\perm$ does not contain $\pat$, then $\perm$ \emph{avoids} $\pat$.
\end{defn}

From this, it is natural to define the ``pattern poset'' on permutations.
\begin{defn}
Let the \emph{pattern poset}, $\PPP$,  be the graded poset
over $\bigcup_{k\geqs1}S_k$,
ordered by the containment relation $\preceq$.
\end{defn}
By definition, the elements of rank $k$ in $\PPP$ are exactly the elements of $S_k$.

This paper is concerned with principal downsets of this poset, that is, with the sets of patterns which lie below a given permutation. In particular, we examine those permutations whose downset is as large as possible in the upper ranks.

This is related to problems of \emph{pattern packing}~\cite{albert:packing,miller:packing}, which seek to maximize the total number of distinct patterns contained in a permutation, and to problems of
\emph{superpatterns}~\cite{eriksson:superpattern,Gunby2014,Hegarty2013,miller:packing}, which are concerned with determining the size of the smallest permutations whose downset contains every permutation of some fixed size.
Other related work addresses permutation \emph{reconstruction}~\cite{Ginsburg2007,Raykova2006,smith:reconstruction}, establishing when permutations are uniquely determined by the (multi)set of large patterns they contain.
The reader is referred to the books by B\'ona~\cite{Bona2012} and Kitaev~\cite{Kitaev2011} for an overview of problems related to the permutation pattern poset.

It follows immediately from the definition of $\PPP$ that, for a permutation $\perm \in S_n$, there are at most $\binom{n}{k}$ distinct permutations $\pi \preceq \perm$ that lie exactly $k$ ranks below $\perm$ in $\PPP$, since each such permutation is obtained from $\perm$ by the deletion of $k$ letters from the one-line notation for $\perm$.
Our interest is in those permutations of size $n$ which contain maximally many patterns of size $n-k$.
\begin{defn}
Fix positive integers $n > k \geqs1$. A permutation $\perm \in S_n$ is \emph{$k$-prolific} if
$$
\Big| \big\{\pat \in S_{n-k} \::\: \pat \preceq \perm \big\} \Big| \;=\; \binom{n}{k}.
$$
\end{defn}
Clearly, not every permutation is $k$-prolific. As a trivial example, the identity permutation $12\cdots n \in S_n$ contains only one pattern of each size, and thus is never $k$-prolific for any $k<n$.

Prolific permutations were previously investigated by the second author in \cite{homberger:plentiful}. The present work corrects and significantly improves upon the results presented there.

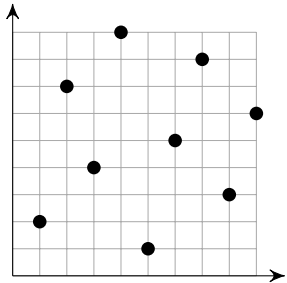
\begin{figure}[htbp]
  $$
  \begin{tikzpicture}[scale=0.36,>=stealthp] 
    \draw[help lines] (0,0) grid (9,9);
    \setplotptradius{0.25}
    \plotpermnobox[black]{}{2,7,4,9,1,5,8,3,6}
    \draw[->] (0,0) -- (10,0);
    \draw[->] (0,0) -- (0,10);
  \end{tikzpicture}
  $$
  \caption{The plot of the permutation $274915836$.}
  \label{fig:plot}
\end{figure}

It is helpful to consider permutations from a graphical perspective.

\begin{defn}\label{defn:plot}
The \emph{plot} of a permutation $\perm\in S_n$ is the collection of lattice points $\{(i, \perm(i)):1\leqs i\leqs n\}$ in the Euclidean plane $\bbR^2$. In practice, we tend to identify the $i$th entry of a permutation $\perm$ with the \emph{point} $(i, \perm(i))$ in its plot, and we  linearly order the points in a plot from left to right; that is,
$$(i,\perm(i)) \;<\; (j,\perm(j)),$$
if $i < j$.
\end{defn}

See Figure~\ref{fig:plot} for an illustration of a permutation plot.

This viewpoint motivates the following two definitions concerning the distance between entries of a permutation.

\begin{defn}
For a permutation $\perm \in S_n$ and $i,j \in [n]$,
the \emph{distance} $\dw(i,j)$ between the $i$th and $j$th entries of $\perm$ is given by the $L_1$
distance (the ``taxicab'' or ``Manhattan'' distance) between the corresponding points in the plot of $\perm$:
$$
\dw(i,j) \;=\;
\big|\!\big| (i,\perm(i)) - (j,\perm(j)) \big|\!\big|_1
\;=\;
\big|i - j\big| \:+\: \big|\perm(i) - \perm(j)\big|.
$$
\end{defn}
For example, if $\perm=274915836$, as in Figure~\ref{fig:plot}, then $\dw(1,2)=1+5=6$, and $\dw(1,3)=2+2=4$.

\begin{defn}
For $n>1$, the \emph{breadth} of $\perm \in S_n$, denoted $\br(\perm)$, is the minimum  distance between any two distinct entries:
$$
\br(\perm)
\;=\;
\min_{i,j\in[n],\,i\neq j} \dw(i,j).
$$
\end{defn}
For example, $\br(274915836)=4$, and this is realized by any of the pairs of entries $$\{i,j\} \;\in\; \{\{1,3\},\{2,3\},\{2,4\},\{3,6\},\{4,7\},\{6,7\},\{6,8\},\{6,9\},\{7,9\},\{8,9\}\}.$$

With these definitions in place, we can state our two primary results. First, we have the following complete characterization of $k$-prolific permutations (Theorem~\ref{thm:kprolific iff large breadth}):
\begin{center}
  A permutation~$\perm$ is $k$-prolific if and only if $\br(\perm)\geqs k+2$.
\end{center}
That is, permutations are prolific precisely if their points are not too close together.
(Coleman~\cite{Coleman2004} was the first to observe that maximising the distance between points tends to increase the number of distinct subpermutations.)
As a consequence, it is readily seen that $k$-prolific permutations of size $n$ are in bijection with certain packings of diamonds,
which we call \emph{permuted packings}.
Section~\ref{section:characterization} is dedicated to the proof of this theorem. (This result was previously presented in~\cite{homberger:plentiful}, but the short proof given there contains an error.)

It is not possible for small permutations to be $k$-prolific because their points are too close together.
Hence, our second main result is an exact determination of the minimum possible size of a $k$-prolific permutation (Corollary~\ref{cor:minprol}):
\begin{center}
The smallest $k$-prolific permutations have size $\ceil{k^2/2+2k+1}$.
\end{center}
In Section~\ref{section:lower bound},
we prove that every $k$-prolific permutation
must be at least this big (Theorem~\ref{thm:minprol lower bound}).
The argument
relies heavily on the interpretation of $k$-prolific permutations as permuted diamond packings.
Then, in Section~\ref{section:constructions}, we present constructions demonstrating that $k$-prolific permutations do exist of this size (Theorem~\ref{thm:sigmak is kprolific}), and also of all greater sizes (Theorem~\ref{thm:kprolific from minprol}).

In the final section of the paper we discuss possible directions for further research, including some questions concerning permuted packings which may be of independent interest.

\section{\texorpdfstring{Characterizing $k$-prolific permutations}{Characterizing k-prolific permutations}}
\label{section:characterization}

We begin by introducing notation to denote the pattern that results from the deletion of specified entries from a permutation.

\begin{defn}
For a permutation $\perm \in S_n$ and $i\in[n]$, let
$\wdel{i} \in S_{n-1}$
be the pattern formed by deleting the $i$th entry from $\perm$.
Similarly, if $A = \{i_1,i_2,\ldots, i_k\}\subset[n]$, then let $\wdel{A} \in S_{n-k}$ be the pattern formed by deleting the $i_1$th, $i_2$th, $\ldots$, $i_k$th entries from $\perm$.
\end{defn}

The goal of this section is to prove that a permutation is $k$-prolific if and only if its breadth is at least $k+2$.
Specifically, we need to demonstrate that, given a permutation $\perm\in S_n$, there exist distinct $k$-sets of indices $A, B \subset [n]$ such that $\wdel{A}=\wdel{B}$ if and only if $\br(\perm)<k+2$.

The proof of the ``only if'' direction is straightforward (and was first proved by Hegarty~\cite{Hegarty2013}), as is the argument in the ``if'' direction when there is an index common to $A$ and $B$;
we present these later.
The next several pages, leading up to Lemma~\ref{lem:disjoint A and B} are thus concerned with characterizing the situation when $\wdel{A}=\wdel{B}$
with $A$ and $B$ disjoint.
To this end, we introduce a plane graph associated with such a scenario and determine its structure.

To define this graph, we first
need to define what it means for an entry in a permutation to ``fulfill'' an entry in a pattern that it contains.

\begin{defn}
Suppose $\perm \in S_n$ and $A\subset[n]$.
Let $[n]\setminus A = \{i_1,i_2, \ldots,i_r\}$, where $i_1 < i_2 < \cdots < i_r$.
For each $j\in[r]$, we say that the $i_j$th entry of $\perm$ \emph{fulfills} the $j$th entry of $\wdel{A}$.
\end{defn}

Our graph joins the points of $\perm$ that fulfill the ``same'' point in $\wdel{A}$ and $\wdel{B}$.

\begin{defn}\label{defn:chain graph}
Given a permutation $\perm\in S_n$, and disjoint $k$-sets of indices $A, B \subset [n]$,
such that $\wdel{A}=\wdel{B}$,
the \emph{chain graph} of $\perm$ for $A$ and $B$ is a plane graph on the points in the plot of $\perm$.
For each index $i \in [n - k]$, an edge is added between the point of $\perm$ that fulfills the $i$th entry of $\wdel{A}$ and the point of $\perm$ that fulfills the $i$th entry of $\wdel{B}$.
If $\wdel{A}(i)$ and $\wdel{B}(i)$ are fulfilled by the same point, $p$, of $\perm$, then we call $p$ a \emph{fixed point}, and no edge is added.

To facilitate the discussion, we let the vertices corresponding to elements of $A$ be coloured \emph{red}, and those corresponding to elements of $B$ be coloured \emph{blue}. The remaining vertices are \emph{uncoloured}.
\end{defn}

Note that this definition implies that no vertex of a chain graph has degree greater than two.
See Figure~\ref{fig:chain graph} for an illustration of a chain graph; its vertex set contains eight red points (in~$A$), eight blue points (in~$B$), six fixed points, and seventeen other uncoloured points.

\begin{figure}[htbp]
  $$
  \begin{tikzpicture}[scale=0.24,line join=round,>=stealthp] 
    \draw[help lines] (1,1) grid (39,39);
    \draw [thick,->] (3,4)--(5,7); \draw [thick,->] (5,7)--(8,12); \draw [thick,->] (8,12)--(12,17); \draw [thick,->] (12,17)--(16,21); \draw [thick,->] (16,21)--(20,25); \draw [thick,->] (20,25)--(23,29);
    \draw [thick,->] (4,11)--(6,16); \draw [thick,->] (6,16)--(9,20); \draw [thick,->] (9,20)--(13,24); \draw [thick,->] (13,24)--(17,28);
    \draw [thick,->] (7,5)--(11,8); \draw [thick,->] (11,8)--(15,13);
    \draw [thick,->] (19,18)--(22,22); \draw [thick,->] (22,22)--(24,26);
    \draw [thick,->] (10,3)--(14,6); \draw [thick,->] (14,6)--(18,10); \draw [thick,->] (18,10)--(21,15);
    \draw [thick,->] (29,38)--(27,36); \draw [thick,->] (27,36)--(26,34);
    \draw [thick,->] (30,37)--(28,35);
    \draw [thick,->] (39,9)--(38,14); \draw [thick,->] (38,14)--(37,19); \draw [thick,->] (37,19)--(36,23); \draw [thick,->] (36,23)--(35,27); \draw [thick,->] (35,27)--(34,30);
    \setplotptradius{0.4}
    \plotpermnobox[black]{}{32, 39,  4, 11,  7, 16,  5, 12, 20,  3,  8, 17, 24,  6, 13, 21, 28, 10, 18, 25, 15, 22, 29, 26, 31, 34, 36, 35, 38, 37,  2, 33,  1, 30, 27, 23, 19, 14,  9}
    \setplotptradius{0.5}
    \plotpermnobox[red]  {}{ 0,  0,  4, 11,  0,  0,  5,  0,  0,  3,  0,  0,  0,  0,  0,  0,  0,  0, 18,  0,  0,  0,  0,  0,  0,  0,  0,  0, 38, 37,  0,  0,  0,  0,  0,  0,  0,  0,  9}
    \plotpermnobox[blue] {}{ 0,  0,  0,  0,  0,  0,  0,  0,  0,  0,  0,  0,  0,  0, 13,  0, 28,  0,  0,  0, 15,  0, 29, 26,  0, 34,  0, 35,  0,  0,  0,  0,  0, 30,  0,  0,  0,  0,  0}
    \setplotptradius{0.25}
    \plotpermnobox[white] {}{ 0,  0,  0,  0,  0,  0,  0,  0,  0,  0,  0,  0,  0,  0, 13,  0, 28,  0,  0,  0, 15,  0, 29, 26,  0, 34,  0, 35,  0,  0,  0,  0,  0, 30,  0,  0,  0,  0,  0}

    \draw [gray] (1,-5)--(40,-5);
    \draw [very thick] (1,-5)--(3,-5)--(5,-3)--(7,-3)--(8,-2)--(10,-2)--(11,-1)--(15,-1)--(16,-2)--(17,-2)--(18,-3)--(19,-3)--(20,-2)--(21,-2)--(22,-3)--(23,-3)--(25,-5)--(26,-5)--(27,-6)--(28,-6)--(29,-7)--(31,-5)--(34,-5)--(35,-6)--(39,-6)--(40,-5);

    \setplotptradius{0.25}
    \plotpermnobox[black]{}{-5, -5,   0,  0, -3, -3,  0, -2, -2,  0, -1, -1, -1, -1,  0, -2,  0, -3,  0, -2,  0, -3,  0,  0, -5,  0, -6,  0,  0,  0, -5, -5, -5,  0, -6, -6, -6, -6,  0}
    \setplotptradius{0.35}
    \plotpermnobox[red]  {}{ 0,  0,  -5, -4,  0,  0, -3,  0,  0, -2,  0,  0,  0,  0,  0,  0,  0,  0, -3,  0,  0,  0,  0,  0,  0,  0,  0,  0, -7, -6,  0,  0,  0,  0,  0, 0,   0,  0, -6}
    \plotpermnobox[blue] {}{ 0,  0,   0,  0,  0,  0,  0,  0,  0,  0,  0,  0,  0,  0, -1,  0, -2,  0,  0,  0, -2,  0, -3, -4,  0, -5,  0, -6,  0,  0,  0,  0,  0, -5,  0,  0,   0,  0,  0}
    \setplotptradius{0.15}
    \plotpermnobox[white] {}{ 0,  0,   0,  0,  0,  0,  0,  0,  0,  0,  0,  0,  0,  0, -1,  0, -2,  0,  0,  0, -2,  0, -3, -4,  0, -5,  0, -6,  0,  0,  0,  0,  0, -5,  0,  0,   0,  0,  0}
    \end{tikzpicture}
  $$
  \caption{An oriented chain graph (Definition~\ref{defn:chain graph}), and a plot of the discrepancy (Definition~\ref{defn:discrepancy}) of its vertices.
  The edges of each chain are oriented away from its red end-vertex, shown as a disk, towards its blue end-vertex, shown as a ring.}
  \label{fig:chain graph}
\end{figure}
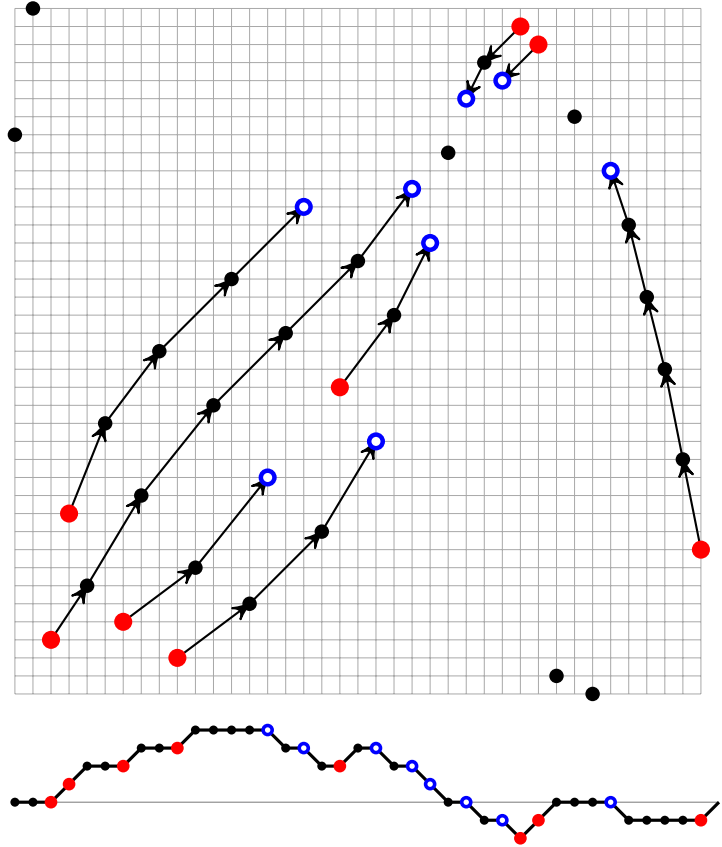

Recall the comment in Definition~\ref{defn:plot} about identifying points in the plot of a permutation with their $x$-coordinates, and ordering them from left to right.
In a chain graph, these points are the vertices. Thus, the vertices of a chain graph are
also identified by their $x$-coordinates, and considered to be ordered from left to right.

Note that
throughout this section we only consider situations in which $A\cap B=\varempty$ and  $\wdel{A}=\wdel{B}=\pat$, say.
The definition of a chain graph is restricted to these settings.
In such cases, if
the $a$th entry of~$\perm$ fulfills the $i$th entry of $\wdel{A}$ and the $b$th entry of~$\perm$ fulfills the $i$th entry of $\wdel{B}$,
then it is also the case that
the $\perm(a)$th entry of~$\perm^{-1}$ fulfills the $\pat(i)$th entry of ${\wdel{A}}^{\!\!-1}$ and the $\perm(b)$th entry of~$\perm^{-1}$ fulfills the $\pat(i)$th entry of ${\wdel{B}}^{\!\!-1}$.
Hence, properties of the chain graph are preserved under permutation inversion,
so symmetry may be invoked to convert ``horizontal'' arguments into ``vertical'' ones.

We are interested in determining the properties of chain graphs, with the ultimate goal of proving that their vertices cannot be too far apart.
To investigate their structure, we introduce the concept of the ``discrepancy'' of a vertex in a chain graph.

\begin{defn}\label{defn:discrepancy}
For each point $p$ in a chain graph, let $\nred(p)$ be the number of red points strictly to the left of~$p$ and $\nblue(p)$ be the number of blue points strictly to the left of~$p$. We define the \emph{discrepancy} of $p$, which we denote $\discrep(p)$, to be the difference, $\discrep(p) = \nred(p) - \nblue(p)$.
\end{defn}

A plot showing the discrepancy of the vertices in a chain graph is exhibited at the bottom of Figure~\ref{fig:chain graph}.

Discrepancy has the properties outlined in the following two observations.

\begin{obs}
If we consider the points from left to right, the discrepancy either increases by one (after a red point), decreases by one (after a blue point), or stays the same (after an uncoloured point).
Since there are equally many red points as blue points, the discrepancy returns to zero after the last point.
\end{obs}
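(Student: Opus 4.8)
The plan is to argue directly from Definition~\ref{defn:discrepancy}, treating this as a bookkeeping statement about how $\nred$ and $\nblue$ evolve as the points of the plot are scanned from left to right. Write $p_1 < p_2 < \cdots < p_n$ for the vertices of the chain graph in left-to-right order. First I would note that no point lies strictly to the left of the leftmost point $p_1$, so that $\nred(p_1) = \nblue(p_1) = 0$ and hence $\discrep(p_1) = 0$; this fixes the base value of the running count.

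Next I would compare $\discrep(p_{j+1})$ with $\discrep(p_j)$. The key observation is that the points lying strictly to the left of $p_{j+1}$ are precisely those lying strictly to the left of $p_j$ together with the single point $p_j$ itself. Consequently $\nred$ (respectively $\nblue$) increases by exactly one when $p_j$ is red (respectively blue), and is otherwise unchanged. A three-way case split on the colour of $p_j$ then gives $\discrep(p_{j+1}) - \discrep(p_j)$ equal to $+1$, $-1$, or $0$ according as $p_j$ is red, blue, or uncoloured, which is the first assertion.

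For the second assertion I would telescope these increments. As the scan runs from $p_1$ through $p_n$, each of the red points and each of the blue points is passed exactly once, so the sum of all increments equals the number of red points minus the number of blue points, namely $|A| - |B| = k - k = 0$. Adding this to the initial value $\discrep(p_1) = 0$ shows that the running discrepancy has returned to $0$ once the last point has been passed.

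There is no real obstacle here: the statement is forced by the definition, and the only point requiring care is the bookkeeping convention, namely that ``strictly to the left'' counts each coloured point exactly once over the course of the scan, so that the equal cardinalities $|A| = |B| = k$ give the claimed cancellation.
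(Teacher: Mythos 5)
Your proof is correct: the paper states this as an \emph{Observation} with no proof at all, treating it as immediate from Definition~\ref{defn:discrepancy}, and your left-to-right bookkeeping (increments of $+1$, $-1$, $0$ according to the colour of the point just passed, telescoping to $|A|-|B|=0$) is precisely the intended justification. Nothing is missing, and no different route is involved.
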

Thus a plot of the discrepancy is a \emph{Motzkin bridge} --- similar to a Motzkin path,
but permitted to wander both above and below its start point.

\begin{obs}\label{obs:fulfillment discrepancy}
If the point $p$ (the $p$th entry of $\perm$) is not red, then it fulfills the entry of $\wdel{A}$ that has index $a:=p-\nred(p)$.
Similarly, if $p$ is not blue, then it fulfills the entry of $\wdel{B}$ that has index $b:=p-\nblue(p)$.
Hence, an uncoloured point $p$ is a fixed point if and only if $\discrep(p)=0$.

If $p$ is not red and $\discrep(p) > 0$,
then
the $a$th entry of $\wdel{B}$ is fulfilled by the $\discrep(p)$th non-blue point to the left of $p$.
Similarly, if $p$ is not blue and $\discrep(p) \geqs 0$, then
the $b$th entry of $\wdel{A}$ is fulfilled by the $\discrep(p+1)th$ non-red point to the right of $p$.
The case for negative $\discrep(p)$ is analogous; indeed, it is equivalent to the positive case applied to the reverse of the permutation.
\end{obs}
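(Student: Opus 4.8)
The plan is to unwind each assertion directly from the definition of fulfillment together with the discrepancy bookkeeping, viewing the non-red points and the non-blue points as two increasing lists indexed from left to right.

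First I would record the two index formulas. Since $[n]\setminus A$ is precisely the set of non-red indices, listing its elements in increasing order shows that the $\ell$th non-red point fulfills the $\ell$th entry of $\wdel{A}$; dually, the $\ell$th non-blue point fulfills the $\ell$th entry of $\wdel{B}$. For a non-red point $p$ I then just compute its rank among non-red points: of the $p-1$ points strictly to the left of $p$ exactly $\nred(p)$ are red, and $p$ itself is non-red, so $p$ is the $(p-\nred(p))$th non-red point and hence fulfills the entry of $\wdel{A}$ with index $a=p-\nred(p)$; symmetrically a non-blue $p$ fulfills the entry of $\wdel{B}$ with index $b=p-\nblue(p)$. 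The fixed-point characterization follows at once: an uncoloured $p$ fulfills both the $a$th entry of $\wdel{A}$ and the $b$th entry of $\wdel{B}$, and because $\wdel{A}=\wdel{B}$ these are the same entry (so $p$ is fixed) exactly when $a=b$, that is, when $\nred(p)=\nblue(p)$, i.e.\ $\discrep(p)=0$.

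For the two locating statements I would reuse the sorted-list idea, now counting across colours. The $a$th entry of $\wdel{B}$ is fulfilled by the $a$th non-blue point; since there are $(p-1)-\nblue(p)$ non-blue points strictly to the left of $p$, the $a$th of them counted inward from $p$ is the $\big[(p-1)-\nblue(p)\big]-a+1=\nred(p)-\nblue(p)=\discrep(p)$th non-blue point to the left of $p$, and the hypothesis $\discrep(p)>0$ is exactly the inequality $a\leqs(p-1)-\nblue(p)$ needed to guarantee this point really lies to the left of $p$. For the right-hand statement, the $b$th entry of $\wdel{A}$ is fulfilled by the $b$th non-red point, and writing $M$ for the number of non-red points with index at most $p$ locates it as the $(b-M)$th non-red point to the right of $p$.

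The one genuinely delicate point --- and where I expect to spend the most care --- is that $\discrep(p+1)$, rather than $\discrep(p)$, is the correct count in the right-hand statement. This is a ``colour of $p$'' bookkeeping issue: $M=p-\nred(p)$ when $p$ is uncoloured but $M=p-\nred(p)-1$ when $p$ is red, so $b-M$ equals $\discrep(p)$ in the first case and $\discrep(p)+1$ in the second. Since $\nred(p+1)-\nred(p)$ is $0$ or $1$ according as $p$ is non-red or red while $\nblue(p+1)=\nblue(p)$ (as $p$ is non-blue), both cases collapse uniformly into $b-M=\discrep(p+1)$, and the hypothesis $\discrep(p)\geqs0$ keeps this nonnegative, the value $0$ being exactly the fixed-point case where $p$ fulfills the entry itself. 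Finally, for negative discrepancy I would pass to the reverse permutation $\perm^{r}$ defined by $\perm^{r}(i)=\perm(n+1-i)$: reversal preserves the colours and the fulfillment structure while interchanging ``left'' and ``right'' and negating $\discrep$ (up to the colour of $p$), so the $\discrep(p)<0$ assertions follow by applying the already-established positive-discrepancy assertions to $\perm^{r}$.
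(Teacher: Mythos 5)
Your proof is correct and takes the route the paper intends: the paper states this as an unproved Observation, treating it as a direct consequence of the definitions of fulfillment and discrepancy, and your rank-counting argument supplies exactly that verification. In particular, your careful bookkeeping of the colour of $p$ itself (which is precisely why the rightward count is $\discrep(p+1)$ rather than $\discrep(p)$) and your reduction of the negative-discrepancy case to the positive one via the reverse permutation match the paper's own parenthetical remark that this case ``is equivalent to the positive case applied to the reverse of the permutation.''
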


We now show that the structure of a chain graph is tightly constrained, and that its name is justified.

\begin{prop}\label{prop:chain graph structure}
The chain graph of a permutation $\perm$ for $k$-sets $A$ and $B$ consists of
$k$ monotone paths,
which we call
\emph{chains} 
together with one isolated vertex for each fixed point.
Each chain has one red end-vertex and one blue.

Suppose $\ell$ and $r$ are the left and right end-vertices, respectively, of a chain $C$.
If $\ell$ is red, then $\discrep(\ell)\geqs0$ and for every point $q$ of $\perm$ such that $\ell<q\leqs r$, we have $\discrep(q)>0$.
Analogously, if $\ell$ is blue, then $\discrep(\ell)\leqs0$ and for every point $q$ of $\perm$ such that $\ell<q\leqs r$, we have $\discrep(q)<0$.
\end{prop}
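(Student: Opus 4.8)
The plan is to prove the proposition in three stages: determine the degrees of all vertices, use Observation~\ref{obs:fulfillment discrepancy} to force monotonicity (hence path structure and absence of cycles), and then track $\discrep$ along a single chain to locate its endpoints and establish the sign conditions.

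\textbf{Degrees.} First I would record the degree of each vertex. A red point lies in $A$ but not $B$, so it fulfils no entry of $\wdel{A}$ yet exactly one entry of $\wdel{B}$, giving it exactly one incident edge; symmetrically every blue point has degree $1$. An uncoloured point $p$ fulfils one entry of each of $\wdel{A}$ and $\wdel{B}$, and by Observation~\ref{obs:fulfillment discrepancy} these two indices coincide precisely when $\discrep(p)=0$, in which case $p$ is a fixed, isolated vertex; otherwise $p$ has degree $2$. Thus the chain graph is a disjoint union of isolated (fixed) vertices together with paths and cycles, the $2k$ degree-one vertices being exactly the red and blue points.

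\textbf{Monotonicity and no cycles.} The structural key is that every degree-two vertex has one neighbour on each side. Such a $p$ is uncoloured with $\discrep(p)\neq0$; when $\discrep(p)>0$, Observation~\ref{obs:fulfillment discrepancy} places the neighbour reached through $p$'s fulfilment of $\wdel{A}$ strictly to the left and the neighbour reached through its fulfilment of $\wdel{B}$ strictly to the right, and the case $\discrep(p)<0$ is the reverse. Consequently the leftmost vertex of any non-trivial component cannot have degree two, so it is an endpoint (red or blue), and likewise on the right. This rules out cycles and shows each non-trivial component is a path that is monotone in the $x$-coordinate: entering a degree-two vertex from one side forces the exit on the other.

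\textbf{The discrepancy estimate.} Let $\ell$ be the leftmost vertex of a chain; by the $A\leftrightarrow B$ symmetry (which negates every discrepancy and swaps the two colours) I may assume $\ell$ is red, deferring the blue case to symmetry. Being leftmost, $\ell$'s unique edge (from its fulfilment of $\wdel{B}$) points rightward, which by Observation~\ref{obs:fulfillment discrepancy} forces $\discrep(\ell)\geqs0$. I would then traverse the chain rightward, following at each step the edge arising from the current vertex's fulfilment of $\wdel{B}$, and prove by induction that every subsequent vertex has discrepancy at least $1$. The inductive claim is: if a chain vertex $p$ is not blue and $\discrep(p+1)>0$, then this edge leads to a vertex $p'>p$ with $\discrep(p')\geqs1$. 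Indeed Observation~\ref{obs:fulfillment discrepancy} identifies $p'$ as the $\discrep(p+1)$-th non-red point to the right of $p$, so only $\discrep(p+1)-1$ non-red points lie strictly between them; writing $\discrep(p')=\discrep(p+1)+\#\{\text{red in }(p,p')\}-\#\{\text{blue in }(p,p')\}$ and bounding the blue count above by the number of non-red interior points yields $\discrep(p')\geqs1$. Since $\discrep(\ell+1)=\discrep(\ell)+1\geqs1$, and $\discrep(p+1)=\discrep(p)\geqs1$ at each uncoloured interior $p$, positivity propagates along the entire chain, giving $\discrep(q)>0$ for all $\ell<q\leqs r$.

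\textbf{Conclusion and obstacle.} Because discrepancy stays positive, the rightmost vertex $r$ cannot be red: a red vertex has $\discrep\geqs0$, so its single edge would point rightward, contradicting its being rightmost. Hence $r$ is blue, every chain has exactly one red and one blue endpoint, and therefore the number of chains equals the number of red points, namely $k$; the blue-left case follows verbatim from the $A\leftrightarrow B$ symmetry. I expect the main obstacle to be the inductive discrepancy estimate: keeping the ``$\discrep(p)$ versus $\discrep(p+1)$'' bookkeeping of Observation~\ref{obs:fulfillment discrepancy} consistent with the count of non-red points between consecutive chain vertices. By contrast, the degree computation and the monotonicity/acyclicity arguments should be routine once that observation is available.
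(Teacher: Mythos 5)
Your degree computation, your $A\leftrightarrow B$ symmetry reduction, and your rightward induction (which mirrors the paper's own leftward iteration, with the same ``count the non-red points between consecutive chain vertices'' bookkeeping) are all sound. However, there is a genuine gap: you never prove that the chains are \emph{monotone} in the sense the proposition asserts. Your structural argument shows only that each component is a path whose vertices are visited in left-to-right order (no horizontal backtracking); the proposition's claim, as the ensuing definitions of increasing and decreasing chains make explicit, is that the values $\perm(\cdot)$ along each chain are also monotone, i.e.\ each chain forms an increasing or a decreasing pattern. This is not cosmetic: Corollary~\ref{cor:cut same chain} and the entire increasing/decreasing dichotomy used later rest on it. The missing step is the vertical counterpart of your horizontal one. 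As the paper notes after Definition~\ref{defn:chain graph}, chain graphs are preserved under permutation inversion, so every degree-two vertex also has exactly one neighbour strictly below and one strictly above it; combined with one neighbour on each side horizontally, no interior vertex of a chain is a local extremum in value, and hence each chain is globally increasing or decreasing. Without invoking this symmetry (or arguing it directly), the first sentence of the proposition is unproved.

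A second, smaller gap: the proposition claims $\discrep(q)>0$ for \emph{every point $q$ of $\perm$} with $\ell<q\leqs r$, not merely for the vertices of the chain, and your induction reaches only the latter. The stronger statement is exactly what Corollary~\ref{cor:cut fixed point} needs (a fixed point lying between two chain vertices must be excluded). Fortunately your own estimate closes this hole with one more sentence: for any point $q$ of $\perm$ strictly between consecutive chain vertices $p<p'$, the blue points strictly between $p$ and $q$ are among the $\discrep(p+1)-1$ non-red points strictly between $p$ and $p'$, so $\discrep(q)\geqs\discrep(p+1)-\bigl(\discrep(p+1)-1\bigr)=1$. But as written, the proposal asserts the conclusion for all $q$ while only having established it at chain vertices, so you should add this step explicitly.
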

\begin{proof}
Firstly, each fixed point has degree 0, by definition.

Secondly, each red vertex has degree 1 since it fulfills a point of $\wdel{B}$, but does not fulfill a point of $\wdel{A}$. Analogously, each blue vertex also has degree~1.

Thirdly, let $p$ be a non-fixed uncoloured vertex.
By Observation~\ref{obs:fulfillment discrepancy}, we know that
$\discrep(p)\neq0$ and that
$p$ is adjacent to two other vertices, one to its left (which we denote $p^-$), and the other to its right (denoted~$p^+$).
So $p$ is a medial vertex in a path whose vertices are ordered from left to right.
By symmetry, $p$ is also medial in a path whose vertices are ordered from bottom to top. Thus, from left to right, the path is either monotonically increasing or monotonically decreasing.

Suppose now that $p$ is not red and $\discrep(p)>0$.
If there are $n_r$ red, $n_b$ blue and $n_u$ uncoloured points in $[p^-,p)$, then
$\discrep(p^-) = \discrep(p) - n_r + n_b$.
Since $p^-$ is the $\discrep(p)$th non-blue point to the left of $p$, we also have $\discrep(p)=n_r+n_u$, so $n_r\leqs\discrep(p)$.
Thus, $\discrep(p^-)=n_b+n_u\geqs0$.
Moreover, $\discrep(p^-)=0$ if and only if the $\discrep(p)$ points immediately to the left of $p$, including $p^-$, are all red.
Furthermore,
since there are only $n_r\leqs\discrep(p)$ up-steps in the plot of the discrepancy between $p^-$ and $p$,
for all $q\in(p^-,p)$, we have $\discrep(q)>0$.

Provided $p^-$ is uncoloured and $\discrep(p^-)>0$, we may repeat this argument until either $p^-$ is coloured or
$\discrep(p^-)=0$. In either case, since $p^-$ is, by definition, not blue, we see that $p^-$ is red. Thus
the left end-vertex, $\ell$, of the chain containing $p$, is red, and $\discrep(q)>0$ for all
$q\in(\ell,p]$.

An analogous argument shows that if $p$ is not fixed and not blue and $\discrep(p)\geqs0$ then
$\discrep(p^+)>0$, and for all $q\in(p,p^+)$, we have $\discrep(q)>0$.
By iterating this, we see that the right end-vertex, $r$, of the chain containing $p$, is blue, and $\discrep(q)>0$ for all
$q\in(p,r]$.

As before, the argument for negative $\discrep(p)$ is equivalent to the positive case applied to the reverse of the permutation.
\end{proof}

As we go forward, in referring to edges in a chain, we make use of the following notation, as used in the proof of Proposition~\ref{prop:chain graph structure}.

\begin{defn}
Given a point $p$ in a chain, but not at its rightmost end, write $p^+$ for the point adjacent to $p$ on its right.
Thus, every edge of a chain is $pp^+$ for some point $p$.
\end{defn}

In the light of Proposition~\ref{prop:chain graph structure}, we distinguish between \emph{increasing} and \emph{decreasing} chains as follows.

\begin{defn}
  A path in a chain graph is an \emph{increasing chain} if each of its edges $pp^+$ satisfies $\perm(p)<\perm(p^+)$.
  A path in a chain graph is a \emph{decreasing chain} if each of its edges $pp^+$ satisfies $\perm(p)>\perm(p^+)$.
\end{defn}

As an example, the chain graph illustrated in Figure~\ref{fig:chain graph} consists of seven increasing chains, one decreasing chain and six isolated fixed points.

Before stating further properties of chain graphs, we introduce the ``span'' and ``central span'' of a pair of points, and the idea of a point ``cutting'' an edge.

\begin{defn}
Fix $\perm \in S_n$. For any distinct $i,j \in [n]$, the \emph{span} of $i$ and $j$ in $\perm$ is the set of entries of $\perm$ whose positions lie strictly between $i$ and $j$ \emph{or} whose values lie strictly between $\perm(i)$ and $\perm(j)$. The \emph{central span} of $i$ and $j$ in $\perm$ is the set of entries of $\perm$ whose positions lie strictly between $i$ and~$j$ \emph{and} whose values lie strictly between $\perm(i)$ and $\perm(j)$.
\end{defn}

Thus, the span of two points consists of the points in a cross-shaped region, and the central span of two points consists of the points in a rectangular region.

\begin{defn}
Let $q$ be in the span of points $p$ and $p'$. This $q$ \emph{cuts}
$pp'$ \emph{from the left} (respectively, \emph{right}) if $q$'s position is to the left (respectively, right) of both $p$ and $p'$. This $q$ \emph{cuts} $pp'$ \emph{from below} (respectively, \emph{above}) if $q$'s value is less (respectively, greater) than both $\perm(p)$ and $\perm(p')$. Cuts from the left or right are \emph{horizontal}; cuts from below or above are \emph{vertical}.
Points in the central span of $p$ and $p'$ are considered to cut $pp'$ both horizontally and vertically.
\end{defn}

There is a close relationship between cutting and the distance between two points.

\begin{obs}
The distance $\dw(p,p')$ is two greater than the number of times that $pp'$ is cut.
\end{obs}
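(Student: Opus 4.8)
The plan is to count the cuts directly by splitting them into \emph{horizontal} and \emph{vertical} ones and matching the two totals against the two coordinate contributions to the $L_1$ distance. Using the standard symmetries of the permutation plot (reverse, complement, and the inverse symmetry already noted in this section, which interchanges horizontal and vertical cuts), I may assume without loss of generality that $p<p'$ and $\perm(p)<\perm(p')$, so that $\dw(p,p')=(p'-p)+(\perm(p')-\perm(p))$. Note that positions $p$ and $p'$, and values $\perm(p)$ and $\perm(p')$, are occupied by $p$ and $p'$ themselves, so no third point shares a coordinate with either endpoint in a way that lands it on the boundary of the span.

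First I would count the horizontal cuts. A point $q$ cuts $pp'$ horizontally precisely when it lies in the span with position to the left of both, or to the right of both, of $p$ and $p'$ — in which case span membership forces $\perm(q)$ to lie strictly between $\perm(p)$ and $\perm(p')$ — or when $q$ lies in the central span, which again requires $\perm(q)$ strictly between $\perm(p)$ and $\perm(p')$. Conversely, any point whose value lies strictly between $\perm(p)$ and $\perm(p')$ contributes exactly one horizontal cut, classified by its position as a cut from the left, a cut from the right, or a central-span point. Hence the number of horizontal cuts equals the number of entries of $\perm$ with value strictly between $\perm(p)$ and $\perm(p')$, and since $\perm$ is a permutation this is exactly $\perm(p')-\perm(p)-1$.

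By the vertical--horizontal symmetry furnished by permutation inversion, the mirror argument shows that the number of vertical cuts equals the number of entries whose position lies strictly between $p$ and $p'$, namely $p'-p-1$. Adding the two counts — and observing that the convention counting a central-span point once horizontally and once vertically is exactly what this addition records — gives a total of $(\perm(p')-\perm(p)-1)+(p'-p-1)=\dw(p,p')-2$ cuts, which is the claim.

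The computation is elementary, so there is no real obstacle; the one place demanding care is the bookkeeping around the central span. I must verify that every span point is tallied exactly once in the appropriate count and that central-span points are double-counted neither too little (omitting their horizontal or vertical contribution) nor too much (counting them among the off-to-the-side cuts as well). Confirming this mutually exclusive, exhaustive classification of span points is the only step where a careless argument could slip.
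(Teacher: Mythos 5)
Your proof is correct. The paper states this as an Observation with no proof at all, treating it as immediate from the definitions; your double-counting argument --- horizontal cuts correspond exactly to the values strictly between $\perm(p)$ and $\perm(p')$, vertical cuts exactly to the positions strictly between $p$ and $p'$, with each central-span point tallied once in each count --- is precisely the verification the paper leaves implicit, including the convention that central-span points contribute two cuts.
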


With these definitions in place, we can state two elementary corollaries of Proposition~\ref{prop:chain graph structure}. These are
the first of several results characterizing how edges of chains may be cut, which we use later to prove that some pair of
points in a chain graph must be close together.
\begin{cor}\label{cor:cut same chain}
An edge of a chain $C$ cannot be cut by another point from $C$.
\end{cor}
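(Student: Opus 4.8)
The plan is to reduce everything to the monotonicity of chains established in Proposition~\ref{prop:chain graph structure}. Fix a chain $C$ and one of its edges $pp^+$, and list the vertices of $C$ from left to right. Because $C$ is a single monotone path whose vertices are ordered by position, traversing the path visits these vertices in increasing order of position; in particular $p$ and $p^+$ are \emph{consecutive} in this left-to-right order, so no vertex of $C$ has position strictly between that of $p$ and that of $p^+$. Consequently, any third vertex $q$ of $C$ lies strictly to the left of both $p$ and $p^+$, or strictly to the right of both.

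First I would treat the case that $C$ is an increasing chain. Here the values of the vertices increase with their positions, so a vertex $q$ lying strictly to the left of $p$ and $p^+$ also has value strictly below both $\perm(p)$ and $\perm(p^+)$, while a vertex lying strictly to the right has value strictly above both. In either case $q$'s position fails to lie strictly between the positions of $p$ and $p^+$, and $q$'s value fails to lie strictly between $\perm(p)$ and $\perm(p^+)$; by the definition of span, $q$ is therefore not in the span of $p$ and $p^+$, and hence cannot cut $pp^+$. The decreasing case is identical after reflecting values, and indeed follows from the reverse symmetry already invoked in this section.

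There is no real obstacle here: the statement is essentially a direct consequence of the fact that the two coordinates of the vertices of a monotone chain increase (or decrease) together, which confines every other vertex to one of the two quadrants diagonally opposite the edge rather than to the cross-shaped span. The only point requiring any care is the initial observation that consecutive edge-endpoints have no chain vertex between them in position, which is exactly what monotonicity of the path guarantees. Once this is in hand, checking that both the position interval and the value interval exclude $q$ is immediate from the definition of span, and the corollary follows.
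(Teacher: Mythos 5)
Your proof is correct and matches the paper's (implicit) reasoning: the paper states this corollary without proof as an immediate consequence of Proposition~\ref{prop:chain graph structure}, and your argument simply fills in the routine details — consecutive vertices of a monotone chain have no chain vertex between them in position, so every other vertex of $C$ lies in a quadrant diagonally opposite the edge and hence outside the cross-shaped span.
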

\begin{cor}\label{cor:cut fixed point}
An
edge of a chain cannot be cut by a fixed point.
\end{cor}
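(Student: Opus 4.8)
The plan is to argue by contradiction, exploiting the discrepancy bounds from Proposition~\ref{prop:chain graph structure} together with the characterization of fixed points in Observation~\ref{obs:fulfillment discrepancy}: a point $f$ is fixed exactly when it is uncoloured and $\discrep(f)=0$. Suppose some fixed point $f$ cuts an edge $pp^+$ of a chain $C$, and let $\ell$ and $r$ be the left and right end-vertices of $C$. Since a chain is a monotone path, both $p$ and $p^+$ lie weakly between $\ell$ and $r$ in the left-to-right order, so $\ell\leqs p<p^+\leqs r$.

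The key case is when the position of $f$ lies strictly between those of $p$ and $p^+$; this covers cuts from below, cuts from above, and cuts by points of the central span (which cut both horizontally and vertically). In this case the left-to-right order gives $\ell<f\leqs r$, and Proposition~\ref{prop:chain graph structure} forces $\discrep(f)>0$ if $\ell$ is red and $\discrep(f)<0$ if $\ell$ is blue. Either way $\discrep(f)\neq0$, which contradicts the fact that a fixed point has discrepancy zero. Note that this case needs neither a red/blue distinction nor an increasing/decreasing distinction beyond recording the sign.

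It remains to dispose of horizontal cuts from the left or right, where $f$'s position lies outside the interval determined by $p$ and $p^+$ while its value lies strictly between $\perm(p)$ and $\perm(p^+)$; here the positional bound of Proposition~\ref{prop:chain graph structure} does not apply directly. To handle these I would invoke the inversion symmetry recorded after Definition~\ref{defn:chain graph}: passing to $\perm^{-1}$ preserves the chain-graph structure, sends fixed points to fixed points, and interchanges positions with values, so that a cut from the left (respectively, right) of $pp^+$ becomes a cut from below (respectively, above) of the corresponding edge in the chain graph of $\perm^{-1}$. Thus these cases reduce to the key case already treated, now applied to $\perm^{-1}$.

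I expect the only real subtlety---and hence the main obstacle---to be the bookkeeping in this symmetry reduction: one must confirm that a fixed point of $\perm$ genuinely corresponds to a fixed point of $\perm^{-1}$ and that a horizontal cut translates into an honest vertical cut under inversion, rather than into some degenerate configuration. Once the position-between case is isolated, however, the contradiction is immediate from the sign of $\discrep(f)$, so the whole statement really is an elementary consequence of Proposition~\ref{prop:chain graph structure}.
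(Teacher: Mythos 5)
Your proof is correct and takes essentially the same route as the paper's: vertical cuts (including central-span points) are ruled out because any such cutting point lies strictly between the chain's end-vertices and hence has non-zero discrepancy by Proposition~\ref{prop:chain graph structure}, contradicting the zero discrepancy of a fixed point, while horizontal cuts are dispatched by the inversion symmetry noted after Definition~\ref{defn:chain graph}. The paper's proof is simply a terser version of this argument, invoking ``symmetry'' for the horizontal case without the bookkeeping you spell out.
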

\begin{proof}
Every point strictly between the end-vertices of a chain has non-zero discrepancy, whereas a fixed point has zero discrepancy. Thus no fixed point can cut a chain vertically.
The horizontal case follows by symmetry.
\end{proof}

In light of the fact that one end-vertex of each chain is red and the other is blue, it makes sense to orient the edges of a chain graph.
We choose to orient the edges of each chain away from its red end-vertex and towards its blue end-vertex.

\begin{defn}
We say that a chain is \emph{oriented leftwards}, \emph{rightwards}, \emph{upwards} or \emph{downwards} according to whether its blue end-vertex is to the left of, to the right of, above or below its red end-vertex, respectively.
\end{defn}
In Figure~\ref{fig:chain graph}, the orientation of a chain is shown using arrows;
three chains (two increasing and one decreasing) are oriented leftwards and five (all increasing) are oriented rightwards, while six chains (five increasing and one decreasing) are oriented upwards and two (both increasing) downwards.

We now show that chains in an oriented chain graph are further constrained by having to satisfy a ``consistent orientation'' property.
\begin{defn}
Suppose that $C$ and $C'$ are chains in the chain graph of a permutation $\perm$, with
left end-vertices $\ell$ and $\ell'$ and right end-vertices $r$ and $r'$, respectively.
We say that $C$ and $C'$ \emph{overlap horizontally} if $\ell < r'$ and $\ell' < r$.
Analogously, 
suppose that $C$ and $C'$ have 
lower end-vertices $b$ and $b'$ and upper end-vertices $t$ and $t'$, respectively.
We say that $C$ and $C'$ \emph{overlap vertically} if $\perm(b) < \perm(t')$ and $\perm(b') < \perm(t)$.
\end{defn}

\begin{prop}
If two chains in a chain graph overlap horizontally, then either both chains are oriented leftwards or both chains are oriented rightwards.
Analogously, if two chains overlap vertically, then either they are both oriented upwards or they are both oriented downwards.
\end{prop}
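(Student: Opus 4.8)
The plan is to prove this dichotomy by contradiction, ruling out the ``mixed'' case in which two horizontally overlapping chains carry opposite horizontal orientations. First I would observe that the two end-vertices of any chain, being a red and a blue point of $\perm$, occupy distinct columns; hence the blue end lies strictly to the left or strictly to the right of the red end, so \emph{every} chain is oriented either leftwards or rightwards. Consequently the assertion to be proved is precisely that two horizontally overlapping chains cannot be oriented in opposite horizontal directions, and by relabelling the two chains it suffices to derive a contradiction from the assumption that $C$ is oriented rightwards while $C'$ is oriented leftwards.

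Adopting the notation of the overlap definition, I write $\ell, r$ for the left and right end-vertices of $C$ and $\ell', r'$ for those of $C'$, so that horizontal overlap means $\ell < r'$ and $\ell' < r$. Since $C$ is oriented rightwards its left end-vertex $\ell$ is red, and since $C'$ is oriented leftwards its left end-vertex $\ell'$ is blue. The heart of the argument is to locate a single point of $\perm$ lying simultaneously in the half-open ranges $(\ell, r]$ and $(\ell', r']$ that are governed by Proposition~\ref{prop:chain graph structure}. I would take $M := \min(r, r')$, which is itself an end-vertex and hence a genuine point of $\perm$. Using $\ell < r$ together with the overlap inequality $\ell < r'$ gives $\ell < M$, and likewise $\ell' < r'$ together with $\ell' < r$ gives $\ell' < M$; combined with $M \leqs r$ and $M \leqs r'$ this yields $\ell < M \leqs r$ and $\ell' < M \leqs r'$.

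Now the two clauses of Proposition~\ref{prop:chain graph structure} apply at $M$ and contradict one another: the red-left-end clause for $C$ forces $\discrep(M) > 0$, while the blue-left-end clause for $C'$ forces $\discrep(M) < 0$. This contradiction establishes the horizontal statement. The vertical statement then follows immediately by the permutation-inversion symmetry recorded earlier in the section, which exchanges the horizontal and vertical roles (columns with rows, and leftward/rightward orientation with downward/upward orientation); applying the horizontal result just proved to $\perm^{-1}$ delivers the vertical claim for $\perm$.

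I expect the only genuine obstacle to be the bookkeeping in the middle step: choosing the correct witness point in the overlap and checking that it lands in the right-closed interval of \emph{both} chains, so that both discrepancy constraints of Proposition~\ref{prop:chain graph structure} actually apply to it. Taking $M = \min(r, r')$ rather than, say, $\max(\ell, \ell')$ is exactly what makes the right-endpoint membership $M \leqs r$ and $M \leqs r'$ automatic; once this witness is in hand, the sign collision is immediate.
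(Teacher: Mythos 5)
Your proof is correct and follows essentially the same route as the paper: both invoke Proposition~\ref{prop:chain graph structure} to produce discrepancies of opposite sign at a single point of $\perm$ lying in the governed ranges $(\ell,r]$ and $(\ell',r']$ of both chains, yielding a contradiction, with the vertical case handled by inversion symmetry. If anything, your explicit witness $M=\min(r,r')$ is slightly more watertight than the paper's phrasing, which asserts that some point of $C'$ lies between the end-vertices of $C$ (a claim that needs a small case check), but the underlying argument is the same.
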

\begin{proof}
Let $C$ and $C'$ be chains in the chain graph of a permutation $\perm$, with
left end-vertices $\ell$ and $\ell'$ and right end-vertices $r$ and $r'$, respectively.
Without loss of generality, suppose that $\ell$ is red. By Proposition~\ref{prop:chain graph structure}, for every point $q$ of~$\perm$ that lies strictly between $\ell$ and $r$, we have $\discrep(q) > 0$.
Now, if $\ell'$ were blue, then, similarly, for every point $q'$ of $C'$, we would have $\discrep(q') \leqs 0$.
But, since $C$ and $C'$ overlap horizontally, there is some point of $C'$ between the end-vertices of $C$,
so $\ell'$ must in fact be red.

The vertical case follows by symmetry.
\end{proof}

This consistent orientation property has consequences for how edges of chains may be cut.
\begin{cor}
If a chain $C$ has an edge cut vertically by a point from another chain $C_1$, then either $C$ and $C_1$ are both oriented leftwards or they are both oriented
rightwards. Analogously,
if $C$ has an edge cut horizontally by a point from a chain $C_2$, then either $C$ and $C_2$ are both oriented upwards or they are both oriented
downwards.
\end{cor}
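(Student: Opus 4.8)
The plan is to reduce the corollary to the preceding proposition on consistent orientation, using only the positional content of a vertical cut. First I would unpack what a vertical cut of an edge $pp^+$ of $C$ says, where $p^+$ is the right-neighbour of $p$, so that $p$ lies strictly to the left of $p^+$. A cut from below (resp.\ above) requires the cutting point's value to be less (resp.\ greater) than both $\perm(p)$ and $\perm(p^+)$; since such a point must also lie in the span of $p$ and $p^+$, and its value is not strictly between $\perm(p)$ and $\perm(p^+)$, its \emph{position} must be strictly between those of $p$ and $p^+$. A point of the central span likewise has position strictly between $p$ and $p^+$. Hence in every one of the three cases of a vertical cut, a point $q$ of $C_1$ cutting $pp^+$ satisfies $p < q < p^+$ in position.

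Next I would convert this into horizontal overlap of $C$ and $C_1$. Write $\ell, r$ for the left and right end-vertices of $C$, and $\ell_1, r_1$ for those of $C_1$. Since $p$ and $p^+$ lie on $C$, we have $\ell \leqs p$ and $p^+ \leqs r$; since $q$ lies on $C_1$, we have $\ell_1 \leqs q \leqs r_1$. Chaining these with $p < q < p^+$ gives $\ell \leqs p < q \leqs r_1$ and $\ell_1 \leqs q < p^+ \leqs r$, that is, $\ell < r_1$ and $\ell_1 < r$. These are precisely the inequalities defining horizontal overlap, so $C$ and $C_1$ overlap horizontally.

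Finally I would apply the preceding proposition: horizontally overlapping chains are both oriented leftwards or both oriented rightwards, which is the assertion for the vertical-cut case. The horizontal-cut statement then follows at once from the inversion symmetry recorded at the start of the section---passing to $\perm^{-1}$ interchanges horizontal and vertical cuts and swaps the pairs leftwards/rightwards and upwards/downwards---so the vertical-cut result applied to $\perm^{-1}$ yields the horizontal-cut result for $\perm$.

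There is no deep obstacle here; the whole argument is definition-chasing followed by a citation of the orientation proposition. The one point that warrants care is the first step, verifying that every flavour of vertical cut---including a central-span point, which by convention cuts both horizontally and vertically---forces $q$'s position strictly between $p$ and $p^+$, so that the three sub-cases collapse into the single clean inequality $p < q < p^+$ that drives the rest.
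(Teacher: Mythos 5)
Your proposal is correct and is exactly the argument the paper leaves implicit: the paper states this as an immediate corollary of the preceding proposition, the point being that any vertical cut of an edge $pp^+$ forces the cutting point's position strictly between $p$ and $p^+$ (its value lies outside the interval, so span membership must come from position), which yields horizontal overlap of the two chains, and then the consistent-orientation proposition applies. Your careful handling of the central-span case and the appeal to inversion symmetry for the horizontal-cut statement match the paper's conventions precisely.
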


\begin{cor}\label{cor:cut incr decr chains}
If $C$ is an increasing chain and $C'$ is a decreasing chain, then it is not possible for $C$ and~$C'$ to overlap both horizontally and vertically.
Thus, an edge of an increasing chain~$C$ cannot be cut both horizontally and vertically by points from a decreasing chain $C'$.
\end{cor}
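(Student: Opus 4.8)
The plan is to reduce everything to the orientation constraints already established. First I would record the orientation dictionary forced by monotonicity: in an \emph{increasing} chain the left end-vertex is the bottom end-vertex and the right end-vertex is the top, so the chain is oriented rightwards exactly when it is oriented upwards, and leftwards exactly when downwards. In a \emph{decreasing} chain the left end-vertex is the top and the right end-vertex is the bottom, so rightwards coincides with downwards and leftwards with upwards. Each end-vertex being either red or blue (by Proposition~\ref{prop:chain graph structure}), these equivalences are immediate from the definitions of increasing/decreasing chain and of the four orientations.

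With this dictionary in hand, I would prove the first assertion by contradiction. Suppose the increasing chain $C$ and the decreasing chain $C'$ overlap both horizontally and vertically. By the consistent-orientation Proposition established above, horizontal overlap forces $C$ and $C'$ to share their left/right orientation, and vertical overlap forces them to share their up/down orientation. If both are oriented rightwards, then $C$ is upwards while $C'$ is downwards, contradicting the shared up/down orientation; the symmetric case of both leftwards is identical. Hence $C$ and $C'$ cannot overlap both horizontally and vertically.

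For the second assertion (the ``Thus''), I would derive it from the first by showing that cutting forces overlapping. If a point $q$ of $C'$ cuts an edge $pp^+$ of $C$ vertically, then $q$ lies strictly between $p$ and $p^+$ in position; since $p$ and $p^+$ lie between the left and right end-vertices of $C$ while $q$ lies between those of $C'$, these position relations combine to give precisely the two inequalities defining horizontal overlap of $C$ and $C'$. The symmetric argument, with values in place of positions, shows that a horizontal cut forces vertical overlap. Thus an edge of $C$ cut both horizontally and vertically by points of $C'$ would make $C$ and $C'$ overlap both ways, which the first assertion forbids.

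I expect no single deep obstacle here; the work is almost entirely bookkeeping. The step most prone to error is getting the orientation dictionary exactly right, correctly pairing ``increasing/decreasing'' with the four directional orientations, since a single sign slip would collapse the case analysis. The cut-implies-overlap translation is routine but must be checked to cover central-span points, which cut both ways, as well as the pure horizontal and vertical cuts.
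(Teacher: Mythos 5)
Your proof is correct and takes essentially the same route the paper intends: the paper states this result as an immediate consequence of the consistent-orientation proposition (offering no written proof), and your orientation dictionary for increasing versus decreasing chains, combined with the observation that a vertical cut forces horizontal overlap and a horizontal cut forces vertical overlap, is exactly the implicit argument. Your care with central-span points and with the sign conventions in the dictionary is appropriate, and both check out.
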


\begin{cor}\label{cor:cut double incr and decr chain}
If $C_1$ and $C_2$ are increasing chains that overlap (either horizontally or vertically), and $C'$ is a decreasing chain,
then it is not possible for there to be points $q_1$ and $q_2$ of $C'$ such that $q_1$ cuts an edge of $C_1$ horizontally and $q_2$ cuts an edge of $C_2$ vertically.
\end{cor}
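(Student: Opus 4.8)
The plan is to argue by contradiction, combining the orientation constraints already established. Suppose points $q_1,q_2\in C'$ exist with $q_1$ cutting an edge of $C_1$ horizontally and $q_2$ cutting an edge of $C_2$ vertically. First I would invoke the corollary immediately preceding Corollary~\ref{cor:cut incr decr chains}, which relates the direction of a cut to the orientations of the chains involved. Since $q_1\in C'$ cuts an edge of $C_1$ horizontally, that corollary forces $C_1$ and $C'$ to share their vertical orientation (both upwards or both downwards). Since $q_2\in C'$ cuts an edge of $C_2$ vertically, the same corollary forces $C_2$ and $C'$ to share their horizontal orientation (both leftwards or both rightwards).

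Next I would use the rigid coupling between the increasing/decreasing type of a chain and its pair of orientations, which is immediate from the definitions. An increasing chain runs from lower-left to upper-right, so it is either rightwards-and-upwards or leftwards-and-downwards; a decreasing chain runs from upper-left to lower-right, so it is either leftwards-and-upwards or rightwards-and-downwards. I would then split into the two cases for the decreasing chain $C'$. If $C'$ is leftwards-and-upwards, then $C_1$, sharing $C'$'s vertical orientation, is upwards, so the increasing chain $C_1$ must be rightwards-and-upwards; while $C_2$, sharing $C'$'s horizontal orientation, is leftwards, so $C_2$ must be leftwards-and-downwards. If instead $C'$ is rightwards-and-downwards, the symmetric deduction gives $C_1$ leftwards-and-downwards and $C_2$ rightwards-and-upwards. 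In both cases $C_1$ and $C_2$ turn out to have opposite orientations, both horizontally and vertically.

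Finally I would derive the contradiction from the hypothesis that $C_1$ and $C_2$ overlap. By the proposition establishing the consistent orientation property (chains overlapping horizontally share a horizontal orientation, and chains overlapping vertically share a vertical orientation), a horizontal overlap would force $C_1$ and $C_2$ to agree in horizontal orientation, and a vertical overlap would force them to agree vertically. But $C_1$ and $C_2$ are leftward/rightward opposites \emph{and} upward/downward opposites, so neither a horizontal nor a vertical overlap is possible, contradicting the assumption that $C_1$ and $C_2$ overlap. This yields the first assertion; the second (that no single edge of $C_1$ can be cut both horizontally and vertically by points of $C'$) is the special case $C_1=C_2$, for which the overlap hypothesis is automatic.

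The main obstacle I anticipate is purely bookkeeping: keeping straight which type of cut (horizontal versus vertical) forces agreement of which kind of orientation (vertical versus horizontal), and correctly pairing each increasing or decreasing type with its two admissible orientations. Once that dictionary is fixed, the contradiction is immediate, and no estimates or constructions are required.
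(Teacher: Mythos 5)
Your proof is correct and is exactly the argument the paper intends: the paper states this corollary without proof, as an immediate consequence of the preceding (unnumbered) corollary relating cuts to shared orientations and of the consistent-orientation proposition, which is precisely the machinery you use, together with the correct coupling of increasing/decreasing type with admissible orientation pairs. (Your closing remark about a ``second assertion'' actually refers to the second part of Corollary~\ref{cor:cut incr decr chains}, not to this statement, but that aside is harmless.)
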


Chains in a chain graph also satisfy an ``interleaving'' property, which implies that two chains cannot cross. Recall the linear ordering of points from Definition~\ref{defn:plot}.

\begin{prop}
Suppose $pp^+$ and $qq^+$ are edges in distinct chains.
If $p<q$, then $p^+<q^+$.
If $\perm(p)<\perm(q)$, then $\perm(p^+)<\perm(q^+)$.
\end{prop}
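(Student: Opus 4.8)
The plan is to reduce both statements to a single observation: the two ``fulfiller'' maps that build the edges of a chain graph are monotone, so each edge is determined by one index, and that index controls the positions of both of its endpoints at once.

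First I would record the key fact that the point of $\perm$ fulfilling the $i$th entry of $\wdel{A}$ is exactly the $i$th non-red point of $\perm$ read from left to right, and likewise the point fulfilling the $i$th entry of $\wdel{B}$ is the $i$th non-blue point. This is immediate from the definition of fulfillment together with the colouring of Definition~\ref{defn:chain graph}: deleting $A$ removes precisely the red points, leaving the non-red points in position order, and the $i$th of these fulfills $\wdel{A}(i)$. Writing $\alpha_i$ and $\beta_i$ for these two points, I then have $\alpha_1 < \alpha_2 < \cdots$ and $\beta_1 < \beta_2 < \cdots$ in the left-to-right order of Definition~\ref{defn:plot}, since they are indexed by their rank among the non-red (respectively non-blue) points.

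Next I would note that, by the construction in Definition~\ref{defn:chain graph}, the edge associated with index $i$ is precisely $\{\alpha_i,\beta_i\}$ (with $\alpha_i=\beta_i$ giving a fixed point and no edge). Hence distinct edges carry distinct indices, so the edges $pp^+$ and $qq^+$, lying in distinct chains, correspond to distinct indices $i\neq j$ with $\{p,p^+\}=\{\alpha_i,\beta_i\}$ and $\{q,q^+\}=\{\alpha_j,\beta_j\}$. Since $p$ and $p^+$ are the left and right end-vertices of their edge, $p=\min(\alpha_i,\beta_i)$ and $p^+=\max(\alpha_i,\beta_i)$, and similarly for $q$ and $q^+$. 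The first statement then follows purely from monotonicity: because $\alpha_i<\alpha_j$ and $\beta_i<\beta_j$ whenever $i<j$, one checks directly that both $\min(\alpha_i,\beta_i)<\min(\alpha_j,\beta_j)$ and $\max(\alpha_i,\beta_i)<\max(\alpha_j,\beta_j)$, with the reverse implications holding by swapping the roles of the two edges. Thus $p<q$, $i<j$, and $p^+<q^+$ are all equivalent, so in particular $p<q$ implies $p^+<q^+$.

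For the second statement I would invoke the symmetry under permutation inversion discussed earlier in this section: inversion interchanges positions with values while preserving the chain-graph structure, and it carries the monotone fulfiller maps $\alpha_i,\beta_i$ (now ranked by value rather than by position) to the corresponding maps for $\perm^{-1}$. The vertical claim is therefore exactly the horizontal claim applied to $\perm^{-1}$. I expect the only real content to lie in the identification of the fulfiller of $\wdel{A}(i)$ with the $i$th non-red point, which is what makes the index $i$ move both endpoints monotonically; after that both parts are immediate, and the single point requiring care is confirming that edges in distinct chains have distinct indices, so that the strict inequalities $\alpha_i<\alpha_j$ and $\beta_i<\beta_j$ are genuinely available.
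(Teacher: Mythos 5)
Your proof is correct, and it takes a genuinely different route from the paper's. The paper proves the horizontal claim by contradiction with a discrepancy count: assuming $p<q<p^+$ and $q^+<p^+$, it uses the fact (Observation~\ref{obs:fulfillment discrepancy}) that $p$ is the $\discrep(p^+)$th non-blue point to the left of $p^+$ and $q$ is the $\discrep(q^+)$th non-blue point to the left of $q^+$, and compares these counts across the interval $[q^+,p^+)$ to force the contradiction $q<p$. Your argument replaces all of this with one global observation: the edge of index $i$ is $\{\alpha_i,\beta_i\}$, where $\alpha_i$ (the $i$th non-red point) and $\beta_i$ (the $i$th non-blue point) are each strictly increasing in $i$, so the left endpoints $\min(\alpha_i,\beta_i)$ and right endpoints $\max(\alpha_i,\beta_i)$ move in lockstep. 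This is shorter, needs no case analysis and no discrepancy machinery, and proves slightly more: the interleaving holds for any two distinct edges, not only edges in distinct chains. One caveat about the vertical half, which applies equally to the paper's ``follows by symmetry'': what inversion actually delivers is the claim with the endpoints of each edge ranked by \emph{value} (lower endpoint versus upper endpoint). That coincides with the statement as written when both chains are increasing or both decreasing, but the literal reading, in which $p^+$ and $q^+$ remain the right-hand neighbours, can fail when one chain is increasing and the other decreasing: for $\perm=4213$ with $A=\{1,4\}$ and $B=\{2,3\}$ the chain graph has the decreasing edge $\{1,2\}$ and the increasing edge $\{3,4\}$, and taking $p=3$, $p^+=4$, $q=1$, $q^+=2$ gives $\perm(p)<\perm(q)$ but $\perm(p^+)>\perm(q^+)$. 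So your reduction proves exactly what the paper's own symmetry step proves (and what is needed downstream, e.g.\ in Corollary~\ref{cor:cut horiz vert}, where the chains concerned are both increasing); this is parity with the paper rather than a gap in your argument, but you should state your vertical conclusion in its value-ranked form rather than with right-hand neighbours.
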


\begin{proof}
If $p^+<q$, then the result follows trivially. Assume that $q<p^+$. Without loss of generality, suppose that $\discrep(p^+)>0$, and hence $\discrep(q^+)>0$.

Suppose $p^+$ were to the right of $q^+$. Let $d$ be the difference between their $x$-coordinates, and let $n_b$ be the number of blue points in the interval $[q^+,p^+)$.
Then, $\discrep(p^+)\leqs\discrep(q^+)+d-2n_b$, with a strict inequality if $n_b=0$, since $q^+$ cannot be red.

Now, point $p$ is the $\discrep(p^+)$th non-blue point to the left of $p^+$.
Since there are $d-n_b$ non-blue points in $[q^+,p^+)$, it is the case that $p$ is
no further to the left than the ($\discrep(q^+)-n_b)$th non-blue point to the left of~$q^+$.
But $q$ is the $\discrep(q^+)$th non-blue point to the left of $q^+$, which means that $p$ is to the right of~$q$,
a contradiction.

The vertical case follows by symmetry.
\end{proof}

The interleaving property further restricts the ways in which an edge of a chain may be cut by points from another chain.

\begin{cor}\label{cor:cut horiz vert}
An edge $e$ of a chain $C$ can be cut at most once horizontally and at most once vertically by points from some other chain~$C'$.
If $C$ and $C'$ are increasing chains, then it is only possible for points of $C'$ to cut $e$ either from the left and from above (if $C'$ is to the upper left of $C$), or else from the right and from below (if $C'$ is to the lower right of $C$).
\end{cor}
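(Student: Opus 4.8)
The plan is to translate the four directional cut conditions into two coordinate-band conditions, after which the first claim follows immediately from the interleaving property and the second is a short monotonicity argument.

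First I would record the reformulation. Writing $e = pp^+$, a point $q$ cuts $e$ horizontally if and only if $\perm(q)$ lies strictly between $\perm(p)$ and $\perm(p^+)$, and $q$ cuts $e$ vertically if and only if the position of $q$ lies strictly between $p$ and $p^+$. This is just unwinding the definitions: a point to the left of both $p$ and $p^+$, or to the right of both, can lie in the span only through its value, so its value is strictly in the band; a central-span point has value strictly in the band by definition; and conversely any point whose value lies strictly in the band lies in the span and is positioned to the left of, to the right of, or (as a central-span point) between $p$ and $p^+$. The vertical statement is the mirror image under inversion of $\perm$.

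With this in hand, the first assertion says that at most one vertex of $C'$ has value strictly between $\perm(p)$ and $\perm(p^+)$. Suppose two did. Because $C'$ is a monotone path, every vertex on the sub-path of $C'$ joining them also has value in the band, so $C'$ contains an \emph{edge} $ss^+$ with both $\perm(s)$ and $\perm(s^+)$ strictly between $\perm(p)$ and $\perm(p^+)$. Taking $\perm(p) < \perm(p^+)$ (the reverse case is symmetric), we have $\perm(p) < \perm(s)$, so the interleaving property applied to the edges $e$ and $ss^+$ of the distinct chains $C$ and $C'$ gives $\perm(p^+) < \perm(s^+)$, contradicting $\perm(s^+) < \perm(p^+)$. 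Hence $e$ admits at most one horizontal cut from $C'$, and by the inversion symmetry at most one vertical cut. For the second assertion, let $C$ and $C'$ both be increasing, so $p < p^+$ and $\perm(p) < \perm(p^+)$. By the first part there is at most one horizontal and at most one vertical cut, which already excludes cutting from both the left and the right, and from both above and below. To exclude a left cut together with a cut from below, note that a left cut is a vertex $q_L \in C'$ with $q_L < p$ and $\perm(q_L) > \perm(p)$, while a cut from below is a vertex $q_B \in C'$ with $q_B > p$ and $\perm(q_B) < \perm(p)$; then $q_L < q_B$ but $\perm(q_L) > \perm(q_B)$, contradicting that $C'$ is increasing. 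The right-with-above pairing is excluded identically. Thus the directions realised by $C'$ lie in $\{\text{left},\text{above}\}$ or in $\{\text{right},\text{below}\}$; a left or above cut places the cutting vertex above the line through $e$ (to the upper left of $C$), while a right or below cut places it below (to the lower right), and this distinguishes the two cases.

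The step I expect to require the most care is the central-span configuration, in which a single vertex $q$ of $C'$ lies strictly inside the bounding box of $e$. By the definitions this $q$ counts as cutting both horizontally and vertically, yet it is neither ``from the left/right'' nor ``from above/below,'' so it does not literally fall under either alternative; and by the first part it must then be the \emph{only} cutting vertex of $C'$. I would attack it by analysing the two $C'$-neighbours of $q$: applying the interleaving property to the edges of $C'$ at $q$ against $e$ places the left neighbour strictly to the lower left of $p$ and the right neighbour strictly to the upper right of $p^+$, so that $q$, together with $p,p^+$ and these two neighbours, forms an increasing sequence along which the two chains strictly interleave. The aim is then to show, using the planarity of the chain graph together with the finer fulfilment and discrepancy structure, that such a vertex cannot in fact occur in a genuine chain graph (or, failing that, that it is forced into one triangle of the box and hence into the corresponding upper-left or lower-right case). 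This is precisely the point where monotonicity and non-crossing alone are not decisive, and where I expect the real work of the corollary to lie.
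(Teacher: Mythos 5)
Your first two paragraphs give a correct and complete proof of both assertions, and by essentially the route the paper intends: the paper offers no explicit proof of this corollary, presenting it as an immediate consequence of the interleaving proposition, and your band reformulation (horizontal cut $\Leftrightarrow$ value strictly between $\perm(p)$ and $\perm(p^+)$; vertical cut $\Leftrightarrow$ position strictly between $p$ and $p^+$) together with the monotonicity of chains from Proposition~\ref{prop:chain graph structure} is exactly what is needed to make that deduction precise.

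The worry in your third paragraph, however, rests on a misreading, and the program you outline there cannot succeed. A vertex of $C'$ lying in the central span of $e$ is not excluded by this corollary, nor can it be: such configurations occur in genuine chain graphs. For instance, take $\perm = 1234$ with $A=\{1,2\}$ and $B=\{3,4\}$, so that $\wdel{A}=\wdel{B}=12$. The chain graph consists of two increasing chains, with edges joining $(1,1)$ to $(3,3)$ and $(2,2)$ to $(4,4)$, and the vertex $(2,2)$ of the second chain lies in the central span of the edge of the first. Hence ``the real work'' you anticipate---showing that such a vertex cannot occur in any chain graph---is provably impossible, and no planarity or discrepancy argument will rescue it. The corollary's second sentence must be read as constraining only the directional cuts (from the left, right, above or below), which is precisely what your second paragraph establishes; your proof is therefore already finished. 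The scenario of a single central-span point supplying both the horizontal and the vertical cut is dealt with not here but inside the proof of Lemma~\ref{lem:disjoint A and B}, where the breadth hypothesis is available: there, $e$ is cut at most $2(k-1)$ times in total, so $\dw(p,p^+)\leqs 2k$, and a central-span point $q$ would force $\dw(p,q)\leqs k$ or $\dw(q,p^+)\leqs k$, contradicting $\br(\perm)\geqs k+2$. In short: delete your third paragraph; the first two constitute the proof.
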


Corollary~\ref{cor:cut horiz vert}, together with Corollaries~\ref{cor:cut same chain}, \ref{cor:cut fixed point}, \ref{cor:cut incr decr chains}, and~\ref{cor:cut double incr and decr chain}, completes our characterization of how edges of chains may be cut.
We are now able to prove that the points in a chain graph cannot be very far apart.

\begin{lem}\label{lem:disjoint A and B}
Suppose we have a permutation $\perm$, and disjoint $k$-sets of indices $A$ and $B$,
such that $\wdel{A}=\wdel{B}$.
Then $\br(\perm) < k+2$.
\end{lem}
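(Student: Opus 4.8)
The plan is to translate the breadth condition into the language of cuts and then exhibit a single short edge. By the observation that $\dw(p,p')$ exceeds the number of times $pp'$ is cut by exactly two, the inequality $\br(\perm)\geqs k+2$ is equivalent to the assertion that every pair of points is cut at least $k$ times. Hence it suffices to produce two points that are cut at most $k-1$ times, and the natural candidates are the two endpoints of a single edge of a chain, since Corollaries~\ref{cor:cut same chain}, \ref{cor:cut fixed point}, \ref{cor:cut incr decr chains}, \ref{cor:cut double incr and decr chain}, and~\ref{cor:cut horiz vert} tell us precisely how such an edge can be cut. Recall from Proposition~\ref{prop:chain graph structure} that the chain graph has exactly $k$ chains together with some isolated fixed points.

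I would fix an edge $e$ of a chain $C$. By Corollary~\ref{cor:cut same chain} no point of $C$ cuts $e$, and by Corollary~\ref{cor:cut fixed point} no fixed point cuts $e$; so every cut of $e$ is contributed by one of the other $k-1$ chains. Corollary~\ref{cor:cut horiz vert} guarantees that each of these cuts $e$ at most once horizontally and at most once vertically. Consequently, if we can arrange that each of the $k-1$ other chains cuts $e$ at most once \emph{in total}, then $e$ is cut at most $k-1$ times, so its two endpoints lie at distance at most $k+1<k+2$, finishing the proof. The whole difficulty therefore lies in ruling out edges that are double-cut (once horizontally and once vertically) by a single chain.

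To control the double cuts I would choose $e$ extremally. Up to reversing the permutation (a symmetry of the chain graph, already used in the proof of Proposition~\ref{prop:chain graph structure}), assume $C$ is increasing, and take $e$ to be the edge incident to the chain-vertex of largest value. Then no point can cut $e$ from above, so by the dichotomy in Corollary~\ref{cor:cut horiz vert} an increasing chain lying to the upper left can cut $e$ only from the left, contributing at most one cut; and by Corollary~\ref{cor:cut incr decr chains} any decreasing chain cuts the increasing edge $e$ at most once in total. The remaining threat is an increasing chain to the lower right cutting $e$ both from the right and from below. This is exactly the configuration addressed by the consistent-orientation propositions together with Corollary~\ref{cor:cut double incr and decr chain}, and I expect this step to be the main obstacle: one must combine the extremal choice of $e$ with these constraints to show that no other chain achieves a simultaneous right-and-below cut, so that each of the $k-1$ other chains indeed contributes at most one cut. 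Summing the contributions then yields at most $k-1$ cuts on $e$, hence a pair of points at distance at most $k+1$, and therefore $\br(\perm)<k+2$.
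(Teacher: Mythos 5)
Your setup (cuts $=\dw-2$, only the other $k-1$ chains can cut an edge, each at most once horizontally and once vertically) matches the paper's, but the step you flag as "the main obstacle" is a genuine gap, and it is not a technicality---it is the entire content of the lemma. The configuration you need to exclude, an increasing chain to the lower right of $C$ cutting your extremal edge $e$ once from the right and once from below, is not forbidden by anything you cite: Corollary~\ref{cor:cut horiz vert} explicitly \emph{permits} exactly this pair of cuts, and Corollary~\ref{cor:cut double incr and decr chain} is of no help, since it constrains a \emph{decreasing} chain cutting two overlapping increasing chains and says nothing about one increasing chain double-cutting another. Your extremal choice does not exclude it either: in the paper's own Figure~\ref{fig:lemma disjoint A and B}, the highest vertex is $(15,19)$, the top of the increasing chain through $p_3$, and its incident edge from $(12,16)$ to $(15,19)$ is cut from below by $(14,9)$ and from the right by $(21,17)$, both points of the increasing chain through $p_2$ lying to its lower right---a configuration consistent with every corollary in the paper. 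Worse, the first half of the paper's proof shows that if $\br(\perm)\geqs k+2$ then \emph{every} edge of \emph{every} chain is necessarily cut twice, once horizontally and once vertically, by two distinct points of a single other chain; so under the hypothesis you are trying to refute, your target edge is double-cut no matter how cleverly you choose it. No purely local argument about one edge can close the proof.

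The paper resolves this with a global infinite-descent argument rather than a single extremal choice. Assuming $\br(\perm)\geqs k+2$, let $p_1$ be the lowermost increasing point and $e_1=p_1p_1^+$; inductively, let $p_{j+1}$ be the lowermost increasing point cutting $e_j=p_jp_j^+$ from the left. Such a point must exist because the inductive invariant---$e_j$ is cut from below by no increasing point---forces the guaranteed double cut of $e_j$ to come from the left and above (by Corollary~\ref{cor:cut horiz vert}). The heart of the proof is then a count of the right cuts $r_j$, horizontal decreasing cuts $h_j$, and vertical decreasing cuts $v_{j+1}$, which shows $p_j - p_{j+1}\geqs k+1-r_j-h_j$ while $p_{j+1}^+-p_{j+1}\leqs k-r_j-h_j$, so $p_{j+1}^+$ lies strictly to the left of $p_j$; this is what propagates the invariant to $e_{j+1}$ and guarantees that each step consumes a fresh increasing chain. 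After $m$ steps (where $m$ is the number of increasing chains) one needs an $(m+1)$st increasing chain, a contradiction. You would need to supply an argument of this global character---your proposal as it stands stops exactly where the real work begins.
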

\begin{proof}
Assume, to the contrary, that $\br(\perm) \geqs k+2$ and let $G$ be the chain graph of $\perm$ for $A$ and~$B$.

To begin, we demonstrate that every edge of a chain in $G$ must be cut twice, once horizontally and once vertically, by points from some other chain.
Let
$e=pp^+$ be an edge of a chain $C$ of $G$.
Without loss of generality, suppose that $C$ is an increasing chain.

Since $\dw(p, p^+) \geqs k + 2$, the edge $e$ is cut by at least $k$ vertices.
By Corollaries~\ref{cor:cut same chain} and~\ref{cor:cut fixed point}, $e$~is not cut by a point of $C$ or by a fixed point.
There are only $k-1$ chains in $G$ that are distinct from~$C$, so, by the pigeonhole principle, there is at least one chain, $C'$, whose points cut $e$ twice.
By Corollary~\ref{cor:cut incr decr chains}, $C'$ is an increasing chain, and
by Corollary~\ref{cor:cut horiz vert}, one cut must be horizontal and the other vertical.

Now, we show that the two cuts cannot be from the same point of $C'$.
Suppose that a point $q$ of $C'$ cuts $e$ both horizontally and vertically; that is, this $q$ is in the central span of $p$ and $p^+$.
Now, $e$ can be cut by at most $k - 1$ points horizontally, one from each chain, and similarly by at most $k - 1$ points vertically, so
$\dw(p,p^+)\leqs 2k$.
Thus, since
$\dw(p,p^+)=\dw(p, q)+\dw(q, p^+)$,
either
$\dw(p, q) \leqs k$ or $\dw(q, p^+) \leqs k$, and so $\br(\perm)\leqs k$, a contradiction.
Thus, there is no point in the central span of $p$ and $p^+$, and
$e$ is cut by two distinct points of~$C'$.

To conclude our proof, we examine the positioning of the points that occur in increasing chains, and by an infinite descent argument reach a contradiction.
Specifically, we prove that if $\br(\perm) \geqs k+2$ then to the upper left of each increasing chain is another increasing chain, which is an impossibility since the number of chains is finite.
For brevity in what follows, we call a point of an increasing chain an \emph{increasing point}, and a point of a decreasing chain a \emph{decreasing point}.

Suppose that $G$ contains $m$ increasing chains and $d:=k-m$ decreasing chains.
Without loss of generality, we may assume that $m\geqs1$. For an illustration of the argument that follows,
see Figure~\ref{fig:lemma disjoint A and B} (for simplicity, $d=0$ in this figure).

\begin{figure}[ht]
  $$
  \begin{tikzpicture}[scale=0.3,line join=round]
    \draw[help lines] (1,1) grid (23,19);
    \draw[very thick] (13,1)--(17,3); \draw[thick] (17,3)--(20,6)--(22,10)--(23,14);
    \node [below] at (13,.7) {$p_1$};
    \node [below right] at (17,3.7) {$p_1^+$};
    \node [] at (14.55,2.9) {$e_1$};
    \draw[very thick] (8,2)--(11,5); \draw[thick] (11,5)--(14,9)--(18,13)--(21,17);
    \node [below] at (8,1.7) {$p_2$};
    \node [below right] at (11,5.7) {$p_2^+$};
    \node [] at (8.8,4.2) {$e_2$};
    \draw[very thick] (4,4)--(6,8); \draw[thick] (6,8)--(9,12)--(12,16)--(15,19);
    \node [below] at (4,3.7) {$p_3$};
    \node [below right] at (6,8.7) {$p_3^+$};
    \node [] at (4.1,6.45) {$e_3$};
    \draw[red,ultra thick] (1,7)--(2,11); \draw[thick] (2,11)--(3,15)--(5,18);
    \node [below] at (1,6.7) {$p_4$};
    \node [below right] at (2,11.7) {$p_4^+$};
    \node [] at (0.5,9.25) {$e_4$};
    \plotpermnobox[black]{}{7,11,15,4,18,8,0,2,12,0,5,16,1,9,19,0,3,13,0,6,17,10,14}
  \end{tikzpicture}
  $$
  \caption{An illustration of the second half of the proof of Lemma~\ref{lem:disjoint A and B}, for $k=m=4$.}
  \label{fig:lemma disjoint A and B}
\end{figure}
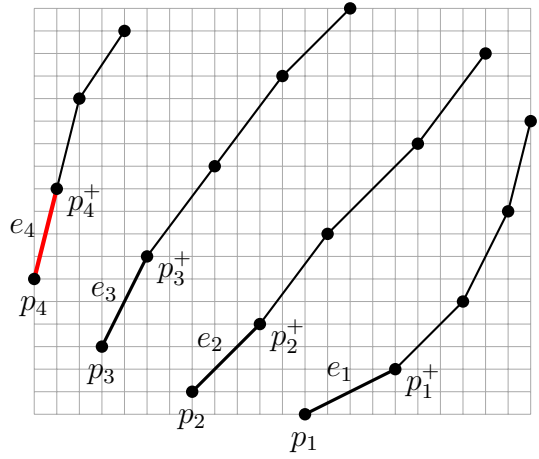

Let $p_1$ be the lowermost increasing point in $G$, and let $e_1$ be the edge $p_1p_1^+$, with $p_1^+$ to the upper right of $p_1$.
Then, for each $j=2,\ldots,m$, let $p_j$ be the lowermost increasing point that cuts $e_{j-1}$ from the left (we show that such a point always exists),
and set $e_j := p_jp_j^+$. Our goal is to prove that, for each $j$, the edge $e_j$
is not cut from below by any increasing point.
We proceed by induction on~$j$.

For the base case, it follows from the definition of $p_1$
and the fact that the central span of $p_1$ and $p_1^+$ is empty
that $e_1$ is not cut from
below by any increasing point.

Now, fix $j\geqs1$ and assume that $e_j$ is
not cut from below by any increasing point.

Since $e_j$ is not cut from below by an increasing point,
no increasing chain has points which cut $e_j$ from both the right and from below. So, by Corollary~\ref{cor:cut horiz vert},
points of some increasing chain must cut $e_j$ from both the left and from above.
So there exists a lowermost increasing point that cuts $e_j$ from the left, namely $p_{j+1}$.

Suppose that
$e_j$ is cut from the right by $r_j$ increasing points
and cut horizontally by $h_j$ decreasing points.
By the definition of $p_{j+1}$, these are the only points that can occur above $p_j$ and below $p_{j+1}$, so
$\perm(p_{j+1}) - \perm(p_j) \leqs r_j + h_j + 1$. Hence, since $\dw(p_j,p_{j+1})\geqs k+2$, we have
$$
p_j - p_{j+1} \;\geqs\; (k+2) \:-\: (r_j + h_j + 1) \;=\; k+1-r_j-h_j.
$$
Now suppose that $e_{j+1}$ is cut vertically by $v_{j+1}$ decreasing points. By Proposition~\ref{cor:cut double incr and decr chain}, we know that $h_j+v_{j+1}\leqs d$.
Observe that $e_{j+1}$ is cut from above by at most $m-1-r_j$ increasing points, the $r_j$ chains cutting $e_j$ from the right also being to the right of $e_{j+1}$.
Hence,
$$
p_{j+1}^+-p_{j+1} \;\leqs\; m-1-r_j+v_{j+1} + 1 \;\leqs\; m-r_j+d-h_j \;=\; k-r_j-h_j.
$$
Thus,
$$
p_j-p_{j+1}^+ \;\geqs\; (k+1-r_j-h_j) \:-\: (k-r_j-h_j) \;=\; 1.
$$
Thus, $p_{j+1}^+$ is to the left of $p_j$, so $e_{j+1}$ is not cut from below by $p_j$.

Moreover,
since the central span of $p_{j+1}$ and $p_{j+1}^+$ is empty, the edge
$e_{j+1}$ is not cut from below by any other increasing point. This is because, otherwise,
either $p_1$ would not be the lowermost increasing point in $G$, or else
$p_2,\ldots,p_{j+1}$ would not be the lowermost increasing points cutting $e_1,\ldots,e_j$ from the left.

Now consider the final edge $e_m=p_mp_m^+$. By the same inductive argument, it too is cut from the left by some increasing point $p_{m+1}$. But this is impossible, because there are only $m$ increasing chains in $G$.
Hence, our initial assumption is false, and thus $\br(\perm) < k+2$.
\end{proof}

Having established the desired result when $A$ and $B$ are disjoint,
we now have almost all we need to establish the relationship between the breadth of a permutation and whether that permutation is $k$-prolific or not.
The final ingredient is the following proposition, adapted from~\cite{homberger:plentiful}.

\begin{prop}\label{prop:breadth after deletion}
Deleting a single entry from a permutation decreases the breadth by at most one.
\end{prop}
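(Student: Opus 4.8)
The plan is to prove the contrapositive-free, quantitative statement that underlies the proposition: if $\perm \in S_n$ and we delete the $i$th entry (say its value is $v = \perm(i)$), then $\br(\wdel{i}) \geqs \br(\perm) - 1$. The first step is to record exactly how the plot transforms under deletion. Removing the point $(i,v)$ shifts every surviving point with position exceeding $i$ one unit to the left, and shifts every surviving point with value exceeding $v$ one unit down. Consequently, for any two surviving entries $p$ and $q$, their horizontal separation decreases by exactly one precisely when $i$ lies strictly between their positions, and is otherwise unchanged; symmetrically, their vertical separation decreases by one precisely when $v$ lies strictly between their values. Thus $\dw(p,q)$ can drop by at most two, and a drop of two occurs exactly when the deleted point sits in the central span of $p$ and $q$. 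It therefore suffices to show that every surviving pair still has distance at least $\br(\perm)-1$, and the only dangerous pairs are those whose central span contained $(i,v)$.

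For a pair $p,q$ whose central span did \emph{not} contain the deleted point, at most one of the two spanning conditions above can hold, so $\dw(p,q)$ decreases by at most one. Since $\dw(p,q) \geqs \br(\perm)$ originally, the new distance is at least $\br(\perm)-1$, which is already what we want for these pairs.

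The crux is the remaining case, where the deleted point $(i,v)$ lies in the central span of $p$ and $q$; here the distance genuinely drops by two, and a naive bound would only yield $\br(\perm)-2$. The key observation is that such a pair was forced to be far apart to begin with. Writing the positions as $a < i < b$ and noting that $v$ lies strictly between $\perm(a)$ and $\perm(b)$, the horizontal and vertical separations split additively through the deleted point, giving the exact identity $\dw(p,q) = \dw(p,i) + \dw(i,q)$. Each summand is at least $\br(\perm)$ by definition of breadth, so $\dw(p,q) \geqs 2\,\br(\perm)$. After deletion the distance between $p$ and $q$ is therefore at least $2\,\br(\perm) - 2$, and since any two distinct entries of a permutation differ in both coordinate directions we have $\br(\perm) \geqs 2$, whence $2\,\br(\perm) - 2 \geqs \br(\perm) > \br(\perm)-1$. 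Thus even these pairs comfortably retain distance at least $\br(\perm)-1$.

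Combining the two cases, every surviving pair of entries has distance at least $\br(\perm)-1$ in $\wdel{i}$, so $\br(\wdel{i}) \geqs \br(\perm)-1$, as claimed. I expect the only subtle point — the ``main obstacle'' — to be the central-span case, since that is where the distance drops by the full amount of two; the resolution is the additivity identity $\dw(p,q) = \dw(p,i) + \dw(i,q)$, which turns the apparent liability into a surplus by forcing the original separation to be at least $2\,\br(\perm)$.
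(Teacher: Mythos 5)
Your proof is correct and takes essentially the same route as the paper's: both rest on the observation that deleting a point decreases any pairwise distance by at most two, with the full drop of two occurring exactly when the deleted point lies in the central span of the pair, and both then exploit the deleted point's own distances to that pair's endpoints to rule out a fatal drop. The paper phrases this last step as a contradiction at a minimal-distance pair (if $p$ lies in the central span of $i$ and $j$ then $\dw(i,p)<\dw(i,j)$, so such a pair cannot realize the breadth), while your additivity identity $\dw(p,q)=\dw(p,i)+\dw(i,q)\geqs 2\,\br(\perm)$ expresses the same geometric fact quantitatively; the difference is purely presentational.
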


\begin{proof}
Suppose $\perm$ is a permutation with breadth $b$, and $p$ is a point of $\perm$.
Now, for any $i$ and~$j$, we have $\dwp(i,j)\geqs\dw(i,j)-2$, where $\dwp(i,j)=\dw(i,j)-2$ precisely when $p$ is in the central span of $i$ and $j$ in $\perm$.
Hence, $\br(\wdel{p})=b-2$ only if there are points $i$ and $j$ such that $\dw(i,j) = b$ and $p$ is in their central span. But $p$ cannot be in their central span, for then we would have $\dw(i,p)<\dw(i,j)=b$, and the breadth of $\perm$ would be less than~$b$.
\end{proof}

Here, finally, is our first main result: a complete characterization of $k$-prolific permutations.

\begin{thm}\label{thm:kprolific iff large breadth}
A permutation $\perm$ is $k$-prolific if and only if $\br(\perm)\geqs k+2$.
\end{thm}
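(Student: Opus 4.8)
The plan is to recast being $k$-prolific as the absence of ``collisions''. Since every pattern of size $n-k$ below $\perm\in S_n$ equals $\wdel{A}$ for some $k$-set $A\subset[n]$, and there are exactly $\binom{n}{k}$ such sets, the permutation $\perm$ is $k$-prolific precisely when the assignment $A\mapsto\wdel{A}$ is injective on $k$-sets; equivalently, precisely when there is \emph{no} pair of distinct $k$-sets $A\neq B$ with $\wdel{A}=\wdel{B}$. Proving the theorem is therefore the same as showing that such a colliding pair exists if and only if $\br(\perm)<k+2$, and I would establish each direction by contraposition. The substantial work for disjoint $A$ and $B$ is already contained in Lemma~\ref{lem:disjoint A and B}; what remains is to reduce the general collision to that case and to supply the reverse construction.

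For the ``if'' direction ($\br(\perm)\geqs k+2\Rightarrow\perm$ is $k$-prolific) I would prove the contrapositive: from a collision $\wdel{A}=\wdel{B}$ with $A\neq B$, deduce $\br(\perm)<k+2$. When $A$ and $B$ are disjoint this is exactly Lemma~\ref{lem:disjoint A and B}. When they share an index $i$, deleting the $i$th entry yields $\perm'=\wdel{i}\in S_{n-1}$ together with $(k-1)$-sets $A\setminus\{i\}$ and $B\setminus\{i\}$ (reindexed) that still collide in $\perm'$. Iterating over all $c:=|A\cap B|$ common indices produces a permutation $\perm''$ of size $n-c$ carrying \emph{disjoint} colliding $(k-c)$-sets, so Lemma~\ref{lem:disjoint A and B} gives $\br(\perm'')<(k-c)+2$. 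Since $\perm''$ arises from $\perm$ by $c$ single-entry deletions, Proposition~\ref{prop:breadth after deletion} gives $\br(\perm'')\geqs\br(\perm)-c$, and chaining the two inequalities yields $\br(\perm)<k+2$.

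For the ``only if'' direction ($\perm$ $k$-prolific $\Rightarrow\br(\perm)\geqs k+2$) I would again argue the contrapositive, assuming $\br(\perm)<k+2$ and exhibiting an explicit collision (in essence Hegarty's construction). Pick points $p,p'$ realising the breadth, so $d:=\dw(p,p')\leqs k+1$, and let $S$ be the set of points in their span. By minimality of $\dw(p,p')$ no point can lie strictly between $p$ and $p'$ in both coordinates, so their central span is empty and hence $|S|=d-2$. After deleting $S$, no surviving point lies strictly between $p$ and $p'$ in either coordinate, so $p$ and $p'$ are consecutive in both coordinates and every other surviving point lies on the same side of both; consequently deleting $p$ or deleting $p'$ from $\wdel{S}$ produces the same pattern. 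Thus $A_0=S\cup\{p\}$ and $B_0=S\cup\{p'\}$ form a collision of size $d-1\leqs k$, and I would pad both with the same $k-(d-1)$ indices drawn from outside $S\cup\{p,p'\}$ to obtain distinct colliding $k$-sets; the equality survives padding exactly because every added point already lies wholly to one side of both $p$ and $p'$.

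The heavy lifting is concentrated in Lemma~\ref{lem:disjoint A and B}, which I may assume, so within this theorem the only genuinely delicate point is the verification in the ``only if'' construction that removing the span renders $p$ and $p'$ interchangeable and that the collision persists under padding; this rests on the fact that the span of $p$ and $p'$ absorbs every point lying horizontally or vertically between them, leaving all remaining points strictly to one side. The common-index reduction in the ``if'' direction is routine once one confirms that reindexing after a deletion preserves the collision, the breadth bookkeeping being handled cleanly by Proposition~\ref{prop:breadth after deletion}.
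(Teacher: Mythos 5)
Your proposal is correct, and it rests on the same two pillars as the paper's proof: the ``only if'' direction is Hegarty's span-deletion construction (delete the span $S$ of two close points together with one of the two points, then pad with further indices), and the ``if'' direction reduces a general collision to the disjoint case settled by Lemma~\ref{lem:disjoint A and B}. The genuine difference is how you organize that reduction. The paper argues by induction on $k$: it gives a separate elementary argument for the base case $k=1$, and in the inductive step it deletes a single common index $c$, uses Proposition~\ref{prop:breadth after deletion} to get $\br(\wdel{c})\geqs k+1$, and invokes the induction hypothesis to conclude that $\wdel{c}$ is $(k-1)$-prolific, contradicting the inherited collision. You instead strip all $c=|A\cap B|$ common indices in one pass, apply Lemma~\ref{lem:disjoint A and B} directly to the resulting disjoint $(k-c)$-sets to obtain $\br(\perm'')<(k-c)+2$, and chain this with $\br(\perm'')\geqs\br(\perm)-c$ coming from $c$ applications of Proposition~\ref{prop:breadth after deletion}. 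This buys you an induction-free argument and makes a separate $k=1$ base case unnecessary, since disjoint singletons are covered by the lemma; the bookkeeping $\br(\perm)\leqs\br(\perm'')+c<k+2$ is arguably more transparent than threading prolificness through an induction, at no cost in rigor (one only needs to note that $k-c\geqs1$ because $A\neq B$ forces $|A\cap B|<k$, and that the collision persists under reindexing after each deletion, which you do note). Two minor remarks on the other direction: your observation that breadth-realizing points have empty central span (hence $|S|=d-2$ exactly) is correct but not needed --- the paper works with an arbitrary pair at distance less than $k+2$ and only needs $|S|<k$ --- and your padding step is sound for precisely the reason you give: once $S$ is deleted, $p$ and $p'$ are adjacent in both position and value among the surviving points, and this interchangeability persists under any further deletions avoiding $p$ and $p'$.
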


\begin{proof}
For the forward direction, suppose that $i \neq j$ are such that $\dw(i,j) < k + 2$.
Let $S$ be the set of elements of~$\perm$ in the span of $i$ and $j$.
It follows that $|S| < k$. But we have $\wdel{S \cup \{i\}}=\wdel{S \cup \{j\}}$, so $\perm$ is not $(|S|+1)$-prolific.
Moreover, $\wdel{S \cup \{i\}\cup X}=\wdel{S \cup \{j\}\cup X}$, for any set of indices $X$ not containing $i$ or $j$, so $\sigma$ is not $k$-prolific.

For the reverse direction, we proceed by induction on $k$.

For the base case, suppose that $\br(\perm)\geqs3$, but that $\perm$ is not $1$-prolific; that is, there exists $i \neq j$ such that $\wdel{i}  = \wdel{j}$.
Assume without loss of generality that $i < j$ and $\perm(i) < \perm(j)$.
The $(j-1)$th entry of $\wdel{i}$ is $\perm(j) - 1$, while the $(j-1)$th entry of $\wdel{j} $ is $\perm(j-1)$ or $\perm(j-1)-1$, depending on the relationship between $\perm(j-1)$ and $\perm(j)$.
The latter case can be disregarded since it would imply that $\perm(j-1)=\perm(j)$, an impossibility.

But if $\perm(j)-1 = \perm(j-1)$, then
$$
\br(\perm) \;\leqs\; \dw(j,j-1) \;=\; \big|j - (j-1)\big| \:+\: \big|\perm(j) - \perm(j-1)\big| \;=\; 2,
$$ a contradiction. Therefore $\perm$ must be $1$-prolific.

Now fix $k > 1$ and assume that, for any permutation $\permb$, if $\br(\permb)\geqs k+1$, then $\permb$ is $(k-1)$-prolific.
Suppose that the breadth of $\perm$ is at least $k+2$, but that $\perm$ is not $k$-prolific; that is, there are distinct $k$-sets $A$ and $B$ such that $\wdel{A} = \wdel{B}$.

If there is an index $c \in A \cap B$, then $\perm' = \wdel{c}$ is $(k-1)$-prolific, by Proposition~\ref{prop:breadth after deletion} and the induction hypothesis.
But
$$
\perm'_{\langle A \setminus \{c\}\rangle} \;=\; \wdel{A} \;=\; \wdel{B} \;=\; \perm'_{\langle B \setminus \{c\}\rangle},
$$
so $\perm'$ cannot be $(k-1)$-prolific.
Thus $A$ and $B$ must be disjoint.

The result then follows by Lemma~\ref{lem:disjoint A and B}.
\end{proof}

As a consequence of this characterization, we see that any permutation of size $n$
containing maximally many patterns of size $n-k$ also contains maximally many larger patterns.

\begin{cor}\label{cor:prolific is hereditary}
  If $\perm$ is $k$-prolific, then $\perm$ is also $j$-prolific for all $1\leqs j< k$.
\end{cor}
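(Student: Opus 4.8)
The plan is to derive this immediately from the characterization in Theorem~\ref{thm:kprolific iff large breadth}, which reduces the notion of being $k$-prolific to a single numerical condition on the breadth. The key observation is that this condition is monotone in $k$: a larger threshold on the breadth subsumes every smaller one.

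Concretely, I would argue as follows. Suppose $\perm$ is $k$-prolific. By the forward implication of Theorem~\ref{thm:kprolific iff large breadth}, this gives $\br(\perm) \geqs k+2$. Now fix any $j$ with $1 \leqs j < k$. Since $j < k$, we have $j + 2 \leqs k + 2 \leqs \br(\perm)$, so in particular $\br(\perm) \geqs j+2$. Applying the reverse implication of Theorem~\ref{thm:kprolific iff large breadth} to the threshold $j$, we conclude that $\perm$ is $j$-prolific. This completes the argument.

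I do not expect any genuine obstacle here: the entire content of the statement has already been absorbed into the breadth characterization, and the corollary is simply the trivial fact that the inequality $\br(\perm) \geqs k+2$ implies $\br(\perm) \geqs j+2$ whenever $j \leqs k$. It is worth remarking that, absent Theorem~\ref{thm:kprolific iff large breadth}, this hereditary property would be less transparent: one would instead have to reason directly that if the deletion of $k$ entries always yields distinct patterns, then so does the deletion of fewer entries, presumably by extending a coincidence $\wdel{A} = \wdel{B}$ on $j$-sets to a coincidence on $k$-sets (as is in fact done within the proof of the theorem, via the device $\wdel{S \cup \{i\} \cup X} = \wdel{S \cup \{j\} \cup X}$). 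Routing everything through breadth avoids this bookkeeping entirely.
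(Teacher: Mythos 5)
Your proof is correct and is exactly the argument the paper intends: the corollary is stated as an immediate consequence of Theorem~\ref{thm:kprolific iff large breadth}, obtained by noting that $\br(\perm)\geqs k+2\geqs j+2$ and applying the characterization in both directions. Nothing further is needed.
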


\section{\texorpdfstring{Bounding the size of $k$-prolific permutations from below}{Bounding the size of k-prolific permutations from below}}
\label{section:lower bound}

In this section, we
determine a lower bound on the size of $k$-prolific permutations.
We use the following notation to denote the size of the smallest $k$-prolific permutation.

\begin{defn}
Given a positive integer $k$, let $\minprol(k)$ be the minimum value $n$ for which there exists a $k$-prolific permutation in $S_n$.
\end{defn}

Clearly,
for a $k$-prolific permutation of size $n$ to exist, we need
$(n-k)!\geqs\binom{n}{k}$. This inequality yields a very weak lower bound on $\minprol(k)$, which can, using Stirling's approximation, be shown to grow like $k+e\sqrt{k}$ for large $k$.
Nevertheless, in conjunction with
the fact that both 2413 and 3142 cover all four non-monotone permutations in $S_3$, it is sufficient to determine that
$\minprol(1)=4$.

We now establish a much tighter lower bound on $\minprol(k)$ by recasting
Theorem~\ref{thm:kprolific iff large breadth} in terms of packings of diamonds.
Recall that a \emph{translational packing}
with a tile (a compact non-empty subset of $\bbR^2$) is a collection of translates of the tile
whose interiors are mutually disjoint (see~\cite{brass:discretegeometry}).
We are interested in translational packings in which the tiles are centered on the points of a permutation.

\begin{defn}
  A translational packing $\Pi$, consisting of $n$ translates of a tile $\TTT$, is
  a \emph{permuted packing} if there exists a permutation $\perm\in S_n$ such that
  $\Pi = \{ \TTT+(i,\perm(i)) : 1\leqs i\leqs n \}$.
\end{defn}

The following proposition establishes the relationship between $k$-prolific permutations and permuted packings.

\begin{prop}
Given integers $n>k\geqs1$,
let $\DDD$ be a diamond whose diagonal has length $k+2$.
The family of $k$-prolific permutations of size $n$ is equinumerous to
the family of permuted packings that consist of $n$ translates of $\DDD$.
\end{prop}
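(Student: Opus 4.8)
The plan is to realize the stated bijection explicitly and to reduce the packing condition to the breadth condition, whereupon Theorem~\ref{thm:kprolific iff large breadth} finishes the job. First I would fix the tile concretely: take $\DDD = \{(x,y)\in\bbR^2 : |x|+|y|\leqs (k+2)/2\}$, the closed $L_1$-ball of radius $(k+2)/2$ centred at the origin, whose (horizontal and vertical) diagonals both have length $k+2$, as required. The candidate map I would use is
$$
\Phi : \perm \;\longmapsto\; \big\{\,\DDD + (i,\perm(i)) : 1\leqs i\leqs n\,\big\},
$$
sending a permutation to the family of diamonds centred at the points of its plot. The claim is that $\Phi$ restricts to a bijection from the $k$-prolific permutations of size $n$ onto the permuted packings consisting of $n$ translates of $\DDD$.

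The geometric heart of the argument is the following elementary fact, which I would establish first: for points $p,q\in\bbR^2$, the interiors of $\DDD+p$ and $\DDD+q$ are disjoint if and only if $\|p-q\|_1 \geqs k+2$. Since the interior of $\DDD+p$ is the open $L_1$-ball $\{z : \|z-p\|_1 < (k+2)/2\}$, this is the standard two-balls statement: if $\|p-q\|_1 < k+2$ then the midpoint $\tfrac12(p+q)$ lies in both open balls, while if $\|p-q\|_1\geqs k+2$ then the triangle inequality forbids any common point. Applying this to the lattice points $p=(i,\perm(i))$ and $q=(j,\perm(j))$, for which $\|p-q\|_1 = \dw(i,j)$, shows that $\Phi(\perm)$ is a translational packing (all interiors pairwise disjoint) if and only if $\dw(i,j)\geqs k+2$ for every $i\neq j$, that is, if and only if $\br(\perm)\geqs k+2$. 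By Theorem~\ref{thm:kprolific iff large breadth}, this holds precisely when $\perm$ is $k$-prolific.

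It then remains to verify that $\Phi$ is a well-defined bijection onto the target family. Well-definedness and surjectivity are immediate from the previous paragraph together with the definition of a permuted packing: if $\perm$ is $k$-prolific then $\Phi(\perm)$ is a translational packing realized by $\perm$, hence a permuted packing; conversely, any permuted packing of $n$ translates of $\DDD$ is by definition $\{\DDD+(i,\perm(i)) : 1\leqs i \leqs n\}$ for some $\perm\in S_n$, and being a packing forces $\br(\perm)\geqs k+2$, so $\perm$ is $k$-prolific and the packing equals $\Phi(\perm)$. For injectivity I would note that a translate $\DDD+c$ determines its centre $c$ uniquely (a bounded nonempty set is invariant under no nonzero translation), so $\Phi(\perm)$ determines the set of centres $\{(i,\perm(i)) : 1\leqs i \leqs n\}$, i.e.\ the plot of $\perm$; since distinct permutations have distinct plots (one recovers $\perm(i)$ as the unique $y$-coordinate at $x=i$), the map $\Phi$ is injective. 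The same observation shows the realizing permutation of a permuted packing is unique, so the correspondence is genuinely one-to-one.

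I expect the only real subtlety --- and hence the main obstacle --- to be pinning down the geometric equivalence with the correct boundary behaviour: the packing condition concerns disjoint \emph{interiors}, so touching diamonds (centres at $L_1$-distance exactly $k+2$) are permitted, and it is precisely this $\geqs$ (rather than a strict $>$) that matches the breadth inequality $\br(\perm)\geqs k+2$. Everything else in the argument is bookkeeping.
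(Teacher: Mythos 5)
Your proposal is correct and follows essentially the same route as the paper: center an $L_1$-ball of radius $k/2+1$ (a diamond of diagonal $k+2$) at each point of the plot, observe that interiors are pairwise disjoint exactly when the breadth is at least $k+2$, and invoke Theorem~\ref{thm:kprolific iff large breadth}. The only difference is that you spell out the details the paper dismisses as ``readily seen'' (the two-balls disjointness criterion and the injectivity/surjectivity bookkeeping), which is fine.
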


\begin{proof}
  By Theorem~\ref{thm:kprolific iff large breadth},
  a permutation is $k$-prolific if and only if the minimum $L_1$ distance between two
  points in the plot of the permutation is at least $k+2$.
  Thus, if we center a ball of radius $k/2+1$ (under the $L_1$ metric) at each point of the plot, the interiors of these balls are mutually disjoint.
  Since, in $\bbR^2$, an $L_1$ ball of radius $k/2+1$ is a diamond whose diagonal has length $k+2$, it is readily seen that this construction yields a bijection between $k$-prolific permutations of size $n$ and the specified family of permuted packings.
\end{proof}

Figure~\ref{fig:diamond packing even} depicts a 6-prolific permutation and the corresponding permuted diamond packing.

\begin{figure}[htbp]
\begin{tikzpicture}[scale=.24]
\foreach \x in
{(1,6),(2,19),(3,28),(4,13),(5,2),(6,23),(7,32),(8,17),(9,10),(10,27),(11,4),(12,21),(13,14),(14,31),(15,8),(16,25),(17,18),(18,3),(19,12),(20,29),(21,22),(22,7),(23,16),(24,1),(25,26),(26,11),(27,20),(28,5),(29,30),(30,15),(31,24),(32,9)}
{
    \fill[blue!18] \x ++(0,4) -- ++(4,-4) -- ++(-4,-4) -- +(-4,4);
}
\begin{scope}
  \clip (3,3) rectangle (30,30);
\foreach \x in
{(1,6),(2,19),(3,28),(4,13),(5,2),(6,23),(7,32),(8,17),(9,10),(10,27),(11,4),(12,21),(13,14),(14,31),(15,8),(16,25),(17,18),(18,3),(19,12),(20,29),(21,22),(22,7),(23,16),(24,1),(25,26),(26,11),(27,20),(28,5),(29,30),(30,15),(31,24),(32,9)}
{
    \fill[blue!9] \x ++(0,4) -- ++(4,-4) -- ++(-4,-4) -- +(-4,4);
}\end{scope}
\foreach \x in
{(1,6),(2,19),(3,28),(4,13),(5,2),(6,23),(7,32),(8,17),(9,10),(10,27),(11,4),(12,21),(13,14),(14,31),(15,8),(16,25),(17,18),(18,3),(19,12),(20,29),(21,22),(22,7),(23,16),(24,1),(25,26),(26,11),(27,20),(28,5),(29,30),(30,15),(31,24),(32,9)}
{
    \fill \x circle (0.275);
    \draw[thick,gray!50!black] \x ++(0,4) -- ++(4,-4) -- ++(-4,-4) -- ++(-4,4) -- ++(4,4);
}
\draw (-3,3)--(36,3);
\draw (-3,30)--(36,30);
\draw (3,-3)--(3,36);
\draw (30,-3)--(30,36);
\draw[<->] (8.4,17) -- (11.6,17);
\draw (10,17.1) node[below] {\scriptsize 4};
\draw[<->] (8,17.4) -- (8,20.6);
\draw (8.2,19) node[left] {\scriptsize 4};
\draw (1,6) node[left] {\scriptsize $a$};
\draw (2,19) node[left] {\scriptsize $b$};
\draw (2.7,13) node[left] {\scriptsize $b$};
\draw (3.2,2) node[left] {\scriptsize $a$};
\end{tikzpicture}
\caption{The permuted diamond packing corresponding to a $6$-prolific permutation, showing the square box used in the proof of Theorem~\ref{thm:minprol lower bound}.}
\label{fig:diamond packing even}
\end{figure}
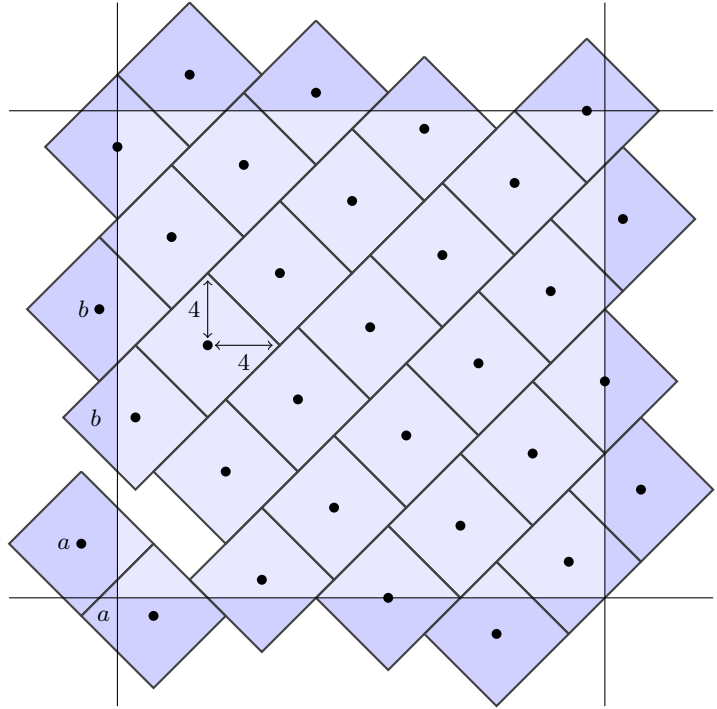

Using this characterization of $k$-prolific permutations, we now bound $\minprol(k)$ from below.

\begin{thm}\label{thm:minprol lower bound}
For each positive integer $k$,
$
\minprol(k) \;\geqs\; \ceil{k^2/2+2k+1}.
$
\end{thm}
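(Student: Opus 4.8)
The plan is to abandon the combinatorics entirely and argue geometrically, using the bijection with permuted diamond packings established immediately above. So let $\perm\in S_n$ be $k$-prolific, write $\rho = (k+2)/2$, and regard $\perm$ as a packing of $n$ closed $L_1$-balls (diamonds) $\DDD_i = \DDD + (i,\perm(i))$ of radius $\rho$, centred at the lattice points $(i,\perm(i))\in[1,n]^2$, with pairwise-disjoint interiors. Each diamond has area $2\rho^2 = (k+2)^2/2$, so the total area consumed is $n(k+2)^2/2$. The whole strategy is to compare this total against the area of a bounding region and to extract a lower bound on $n$ from the observation that, when $n$ is too small, the required area exceeds the available area and the diamonds simply cannot fit.

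First I would record the crude comparison, just to fix the leading term. Every diamond lies inside the octagon $[1,n]^2 \oplus \DDD$ (the $L_1$-neighbourhood of the bounding box of the centres), whose area is $(n-1)^2 + 2(n-1)(k+2) + (k+2)^2/2$. Disjointness gives $n(k+2)^2/2 \le (n-1)^2 + 2(n-1)(k+2) + (k+2)^2/2$, an inequality in $n$ that factors cleanly and already forces $n \ge k^2/2 - 1$. This has the correct shape but is short of the target by $2k+2$, and all of the slack sits in the four edge ``flaps'' of the octagon, which the crude bound treats as completely filled even though the diamonds barely protrude into them.

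The substance of the proof is therefore a sharpened boundary analysis, and this is where the permutation structure enters and where the central square box of Figure~\ref{fig:diamond packing even} organises the bookkeeping. Because $\perm$ has exactly one point in each column, the amount by which the diamond in column $i$ protrudes past the left edge of the bounding box depends only on $i$, and summing these protrusions (a sum of squares $(\rho-i+1)^2$ over the boundary columns) gives the true protruding area exactly, rather than the wasteful flap estimate. The right edge is identical, and the top and bottom edges are the images of these under inversion, invoking the horizontal-to-vertical symmetry noted in Section~\ref{section:characterization}. Replacing the flap terms by these exact protrusions, and separately accounting for the interior area that the boundary diamonds necessarily fail to fill, sharpens the area inequality; solving the resulting quadratic in $n$ should deliver $n \ge \ceil{k^2/2 + 2k + 1}$, the ceiling reflecting the parity of $k$ (for even $k$ the radius $\rho$ is an integer and the boundary sums are exact, while for odd $k$ a half-integer correction appears).

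The hard part will be getting these lower-order terms to close up to the precise constant $2k+1$ rather than merely recovering the leading $k^2/2$, and within that the genuine obstacle is the corner bookkeeping. A diamond near a corner of the box protrudes through two edges at once, so its contribution is double-counted if the edge sums are simply added; the excess is exactly the portion of the diamond lying in the corner quadrant and must be subtracted, and one must simultaneously control the unfilled region forced in the interior near each corner. This is precisely where the breadth hypothesis $\br(\perm)\geqs k+2$ does real work: it keeps the corner-clustered centres far enough apart that only few diamonds can crowd a corner, so that their combined corner contribution is bounded exactly. Reconciling the protrusion out of the box with the forced interior gap, to the accuracy of a single unit, is the delicate heart of the argument.
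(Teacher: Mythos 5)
Your strategy is essentially the paper's own: pass to the permuted packing of $L_1$-balls, compare total tile area against a central box plus margin overflow, compute the overflow exactly using the fact that the permutation places exactly one centre in each column, and finish with integrality. Your exact protrusion sums are the same computation the paper performs via a gluing trick (pairing the overflow of the $j$th and $(k-j)$th diamonds in a margin into complete diamonds, giving $(k-1)s^2+1$ per margin with $s=k/2+1$), and for even $k$ your plan would close: the area inequality gives $n > k^2/2+2k$, and integrality yields $n \geqs k^2/2+2k+1$. The genuine gap is the odd case. For odd $k$, the quantity $k^2/2+2k$ is a half-integer, so area accounting plus integrality yields only $n \geqs k^2/2+2k+1/2$, which is already an integer, whereas the theorem demands $\ceil{k^2/2+2k+1} = k^2/2+2k+3/2$ --- exactly one more. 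No refinement of the diamond bookkeeping can recover that unit, because the obstruction is not wasted accounting but genuinely uncoverable area: the paper's fix is to note that for odd $k$ the semidiagonal $s$ is a half-integer while the centres are lattice points, so three diamonds can never meet at a point, and hence beside the rightmost corner of every diamond sits a forced uncovered hole of area $1/2$; augmenting each tile by this ``extension'' raises the per-tile area to $2s^2+1/2$, and rerunning the margin analysis with extended tiles gives $n > k^2/2+2k+1/2$, hence $n \geqs k^2/2+2k+3/2$. Your remark that ``for odd $k$ a half-integer correction appears'' points in the wrong direction: the correction you get for free falls one short of the target, and bridging it requires this separate idea, which your proposal does not contain.

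A secondary misconception: you have the corner difficulty inverted. The paper never subtracts corner contributions; it simply adds the four per-margin overflows, counting corner regions twice, which is harmless because the sum is only used as an \emph{upper} bound on total tile area --- and this crude treatment already achieves the stated constant. Conversely, exact corner bookkeeping cannot rescue the odd case, since the double-counted corner area can be zero for some packings, so there is no guaranteed saving to harvest there. Nor does the breadth hypothesis do any ``real work'' at the corners: once $\br(\perm)\geqs k+2$ has been used to make the interiors disjoint, everything else follows from the permutation having one point per column and row (which is what forces exactly $k$ diamonds to protrude into each margin). So the step you call the delicate heart of the argument is both unnecessary and not where the missing unit lives.
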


This result was previously proved for $k > 800$ by Gunby~\cite{Gunby2014}.
The basic approach taken in our proof was first used by Miller~\cite{miller:packing}.

\begin{proof}
Suppose that $\perm\in S_n$ is $k$-prolific.
Let $s=k/2+1$ be the length of the semidiagonals of the diamonds in the associated permuted packing.
The area of each of the $n$ diamonds in the packing is $2s^2$.

Consider the square box $[s-1,n+2-s]^2$, centered over the packing, as illustrated in Figure~\ref{fig:diamond packing even}.
The margins around this box have width $k$.
Note that when $k$ is even, $s$ is an integer and the sides of the box pass through the centers of four of the diamonds;
when $k$ is odd, they do not.

The total area of the diamond tiles is bounded above by the area of this square box plus the total area of the parts of the diamonds that ``overflow'' into the margins outside the box. The overflowing parts are shaded more darkly in Figure~\ref{fig:diamond packing even}.
The area of the parts of the diamonds in a given margin can be calculated exactly, as follows.

Consider the region to the left of the left side of the box, including the top left and bottom left corners. In this margin are parts of each of the diamonds centered at the first $k$ points of $\perm$.
For each $j < k/2$, the overflowing part of the $j$th diamond from the left and the overflowing part of the $(k-j)$th diamond can be glued together to form a complete diamond. For example, in Figure~\ref{fig:diamond packing even}, the two parts labelled $a$ form a complete diamond, as do the two labelled $b$.
If $k$ is even, then the overflowing part of the $(k/2)$th diamond is exactly half a diamond. Finally, the overflowing part of the $k$th diamond is a triangle with area $1$.
Thus, the total area of the parts of the diamonds that overflow into any one margin is given by
$$
\left(\tfrac{k-1}{2}\right)\cdot 2s^2 + 1 \;=\; (k-1)s^2 + 1.
$$

The total area of all of the parts of the diamonds that overflow is no more than four times this, a value which counts the contributions from the corners twice. Therefore, we have the inequality
$$
2s^2n \;\leqs\; (n+3-2s)^2 \:+\: 4\big((k-1)s^2+1\big).
$$
Substituting $k/2+1$ for $s$ and applying the quadratic formula, we obtain
$$
n \;\geqs\; \frac{k^2 + 8k + \sqrt{k^4 + 32k - 16}}{4},
$$
an expression which exceeds $k^2/2 + 2k$ for all $k > 1/2$.

Since $\minprol(k)$ is an integer, for even $k$ we thus have $\minprol(k) \geqs k^2/2 + 2k + 1$, as required, and
for odd $k$, $\minprol(k) \geqs k^2/2 + 2k + 1/2$.

\begin{figure}[htbp]
\begin{tikzpicture}[scale=.24]
\foreach \x in
{(7,12),(8,18),(9,3),(10,23),(15,5),(16,11),(17,24),(19,1),(24,4),(25,17),(27,22)}
{
    \fill[blue!18] \x ++(0,3.5) -- ++(3.5,-3.5)  -- ++(.5,-.5)  -- ++(-.5,-.5)  -- ++(-.5,.5) -- ++(-3,-3) -- ++(-3.5,3.5) -- ++(3.5,3.5);
}
\foreach \x in
{(1,9),(2,2),(3,15),(4,21),(5,6),(6,26),(11,8),(12,14),(13,27),(14,19),(18,16),(20,7),(21,20),(22,13),(23,25),(26,10)}
{
    \fill[blue!18] \x ++(0,3.5) -- ++(3,-3)  -- ++(.5,.5)  -- ++(.5,-.5)  -- ++(-.5,-.5) -- ++(-3.5,-3.5) -- ++(-3.5,3.5) -- ++(3.5,3.5);
}
\begin{scope}
  \clip (2.5,2.5) rectangle (25.5,25.5);
\foreach \x in
{(7,12),(8,18),(9,3),(10,23),(15,5),(16,11),(17,24),(19,1),(24,4),(25,17),(27,22)}
{
    \fill[blue!9] \x ++(0,3.5) -- ++(3.5,-3.5)  -- ++(.5,-.5)  -- ++(-.5,-.5)  -- ++(-.5,.5) -- ++(-3,-3) -- ++(-3.5,3.5) -- ++(3.5,3.5);
}
\foreach \x in
{(1,9),(2,2),(3,15),(4,21),(5,6),(6,26),(11,8),(12,14),(13,27),(14,19),(18,16),(20,7),(21,20),(22,13),(23,25),(26,10)}
{
    \fill[blue!9] \x ++(0,3.5) -- ++(3,-3)  -- ++(.5,.5)  -- ++(.5,-.5)  -- ++(-.5,-.5) -- ++(-3.5,-3.5) -- ++(-3.5,3.5) -- ++(3.5,3.5);
}
\end{scope}
\foreach \x in
{(7,12),(8,18),(9,3),(10,23),(15,5),(16,11),(17,24),(19,1),(24,4),(25,17),(27,22)}
{
    \fill \x circle (0.275);
    \draw[thick,gray!50!black] \x ++(0,3.5) -- ++(3.5,-3.5)  -- ++(.5,-.5)  -- ++(-.5,-.5)  -- ++(-.5,.5) -- ++(-3,-3) -- ++(-3.5,3.5) -- ++(3.5,3.5);
}
\foreach \x in
{(1,9),(2,2),(3,15),(4,21),(5,6),(6,26),(11,8),(12,14),(13,27),(14,19),(18,16),(20,7),(21,20),(22,13),(23,25),(26,10)}
{
    \fill \x circle (0.275);
    \draw[thick,gray!50!black] \x ++(0,3.5) -- ++(3,-3)  -- ++(.5,.5)  -- ++(.5,-.5)  -- ++(-.5,-.5) -- ++(-3.5,-3.5) -- ++(-3.5,3.5) -- ++(3.5,3.5);
}
\draw (-2.5,2.5)--(30.5,2.5);
\draw (-2.5,25.5)--(30.5,25.5);
\draw (2.5,-2.5)--(2.5,30.5);
\draw (25.5,-2.5)--(25.5,30.5);
\end{tikzpicture}
\caption{A permuted packing of extended diamonds corresponding to a $5$-prolific permutation.}
\label{fig:diamond packing odd}
\end{figure}
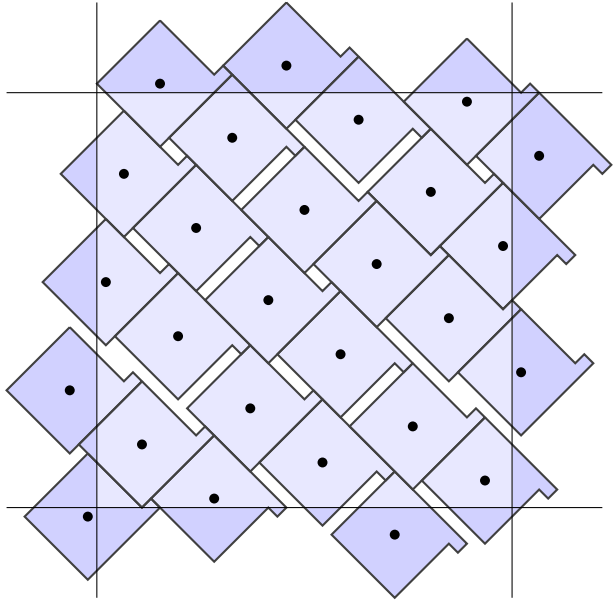
A marginally greater lower bound can be established for odd $k$ by using a slightly different shape of tile.
If $k$ is odd, then the length of the semidiagonal of the diamonds,
$s = k/2 + 1$, is a half-integer.
Since, in the packing, each diamond is centered on an integer lattice point, it is not possible for three of these diamonds to meet at a point.
Thus, either above or below the
rightmost corner of each of these diamonds is a small diamond-shaped region, of semidiagonal length $1/2$, not covered by any diamond tile.
We thus extend the tiles by the addition of these regions, which we call \emph{extensions}, and consider permuted packings of these extended diamonds.
See Figure~\ref{fig:diamond packing odd} for an illustration.

For these extended diamonds, we now repeat our analysis of the
parts of the tiles that overflow into the margins.
The total area of the parts of the diamond-shaped extensions to the right of the box is $k/2+1/4$, made up of $k$ complete extensions, each of area $1/2$, and half an extension from the $(k+1)$th diamond from the right.
The contribution from the extensions that overflow into the top margin is at most $k/4$, made up of $(k-1)/2$ complete extensions and half an extension.
The bottom margin is analogous.
Since no extension can overflow into the left margin, the total area of the parts of the extensions that overflow is no more than $k+1/4$.
Therefore, accounting for the additional area of each tile, we have the inequality
$$
(2s^2+1/2)n \;\leqs\; (n+3-2s)^2 \:+\: 4\big((k-1)s^2+1\big) \:+\: k+1/4.
$$
After substitution for $s$ and the application of the quadratic formula, this yields
$$
n \;\geqs\; \frac{k^2 + 8k + 1 + \sqrt{k^4 + 2k^2 + 32k - 19}}{4},
$$
an expression which exceeds $k^2/2 + 2k + 1/2$ for all $k > 5/8$.

Since $\minprol(k)$ is an integer, for odd $k$ we thus have $\minprol(k) \geqs k^2/2 + 2k + 3/2$, as required.
\end{proof}

\section{\texorpdfstring{Constructions of $k$-prolific permutations}{Constructions of k-prolific permutations}}
\label{section:constructions}

In this section, we
establish that there
is a $k$-prolific permutation of every size greater than or equal to the lower bound of Theorem~\ref{thm:minprol lower bound}, by construction.

Let $m(k)=\ceil{k^2/2+2k+1}$ be the lower bound function from Theorem~\ref{thm:minprol lower bound}.
Our initial constructions enable us to prove that $\minprol(k)=m(k)$.

\begin{defn}\label{defn:a short prolific permutation}
  For each $k\geqs1$, define $\perm_k$ as follows.
  For $i=1,\ldots,m(k)$, let
  $$
  \perm_k(i) \;=\;
  \begin{cases}
  i(k+2) \!\!\mod\, m(k)+1, & \text{ if $k$ is odd, and}\\[2pt]
  i(k+1) \!\!\mod\, m(k)+1, & \text{ if $k$ is even.}
  \end{cases}
  $$
\end{defn}

  See Figure~\ref{fig:sigma5 and sigma6} for an illustration of $\perm_5$ and $\perm_6$.
  We claim that $\perm_k$ is a $k$-prolific permutation of size $m(k)$.

First, we must prove that
$\perm_k$ is indeed a permutation; that is, that $\perm_k(i)$ takes a distinct value for each $i$. To do so, it is sufficient to show that $k+2$ is coprime to $m(k)+1$ when $k$ is odd, and that $k+1$ is coprime to $m(k)+1$ when $k$ is even. Observe that for odd $k$, we have
$$2 \big( k^2/2 + 2k + 5/2 \big) \:+\: (-k-2)\big(k+2\big) \;=\; 1,$$
and for even $k$,
$$2 \big( k^2/2 + 2k + 2\big) \:+\: (-k-3) \big(k+1\big) \;=\; 1.$$
Thus the relevant terms are coprime.

To demonstrate that each $\perm_k$ is $k$-prolific, we use an alternative characterization of the permutations, in terms of two interlocking grids of lattice points as illustrated in Figure~\ref{fig:sigma5 and sigma6}.

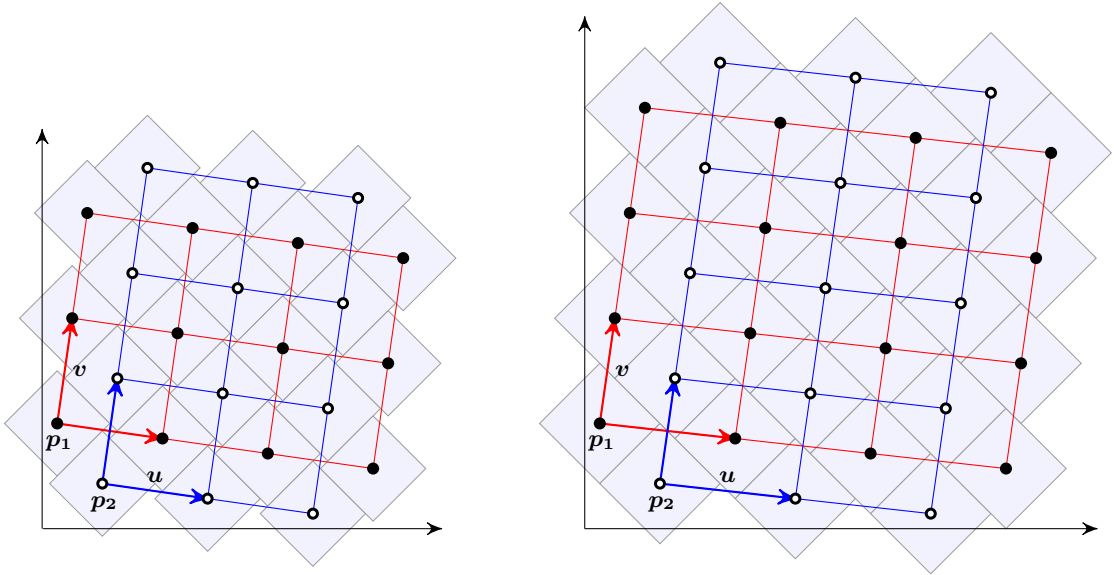
\begin{figure}[htbp]
\begin{tikzpicture}[scale=.2,>=stealthp] 
\foreach \x in
{(1,7),(2,14),(3,21),(4,3),(5,10),(6,17),(7,24),(8,6),(9,13),(10,20),(11,2),(12,9),(13,16),(14,23),(15,5),(16,12),(17,19),(18,1),(19,8),(20,15),(21,22),(22,4),(23,11),(24,18)}
{
    \fill[blue!5] \x ++(0,3.5) -- ++(3.5,-3.5) -- ++(-3.5,-3.5) -- +(-3.5,3.5);
    \draw[gray!75!white] \x ++(0,3.5) -- ++(3.5,-3.5) -- ++(-3.5,-3.5) -- ++(-3.5,3.5) -- ++(3.5,3.5);
}
\draw (1,6.9) node[below] {\scriptsize $\,\bm{p_1}$};
\draw[red] (1,7) -- (22,4);
\draw[red] (2,14) -- (23,11);
\draw[red] (3,21) -- (24,18);
\draw[red] (1,7) -- (3,21);
\draw[red] (8,6) -- (10,20);
\draw[red] (15,5) -- (17,19);
\draw[red] (22,4) -- (24,18);
\draw[thick,red,->] (1,7) -- (8,6);
\draw[thick,red,->] (1,7) -- (2,14);
\draw (1.3,10.5) node[right] {\scriptsize $\bm{v}$};
\draw (4,2.9) node[below] {\scriptsize $\,\bm{p_2}$};
\draw[blue] (4,3) -- (18,1);
\draw[blue] (5,10) -- (19,8);
\draw[blue] (6,17) -- (20,15);
\draw[blue] (7,24) -- (21,22);
\draw[blue] (4,3) -- (7,24);
\draw[blue] (11,2) -- (14,23);
\draw[blue] (18,1) -- (21,22);
\draw[thick,blue,->] (4,3) -- (11,2);
\draw (7.5,2.4) node[above] {\scriptsize $\bm{u}$};
\draw[thick,blue,->] (4,3) -- (5,10);
\foreach \x in
{(1,7),(2,14),(3,21),(4,3),(5,10),(6,17),(7,24),(8,6),(9,13),(10,20),(11,2),(12,9),(13,16),(14,23),(15,5),(16,12),(17,19),(18,1),(19,8),(20,15),(21,22),(22,4),(23,11),(24,18)}
{
    \fill \x circle (0.4);
}
    \draw[->] (0,0) -- (26.5,0);
    \draw[->] (0,0) -- (0,26.5);
    \draw[white] (0,0) -- (3,-3);  
\foreach \x in
{(4,3),(5,10),(6,17),(7,24),(11,2),(12,9),(13,16),(14,23),(18,1),(19,8),(20,15),(21,22)}
{
    \fill [white] \x circle (0.2);
}\end{tikzpicture}
$\qquad$
\begin{tikzpicture}[scale=.2,>=stealthp] 
\foreach \x in
{(1,7),(2,14),(3,21),(4,28),(5,3),(6,10),(7,17),(8,24),(9,31),(10,6),(11,13),(12,20),(13,27),(14,2),(15,9),(16,16),(17,23),(18,30),(19,5),(20,12),(21,19),(22,26),(23,1),(24,8),(25,15),(26,22),(27,29),(28,4),(29,11),(30,18),(31,25)}
{
    \fill[blue!5] \x ++(0,4) -- ++(4,-4) -- ++(-4,-4) -- +(-4,4);
    \draw[gray!75!white] \x ++(0,4) -- ++(4,-4) -- ++(-4,-4) -- ++(-4,4) -- ++(4,4);
}
\draw (1,6.9) node[below] {\scriptsize $\,\bm{p_1}$};
\draw[red] (1,7) -- (28,4);
\draw[red] (2,14) -- (29,11);
\draw[red] (3,21) -- (30,18);
\draw[red] (4,28) -- (31,25);
\draw[red] (1,7) -- (4,28);
\draw[red] (10,6) -- (13,27);
\draw[red] (19,5) -- (22,26);
\draw[red] (28,4) -- (31,25);
\draw[thick,red,->] (1,7) -- (10,6);
\draw[thick,red,->] (1,7) -- (2,14);
\draw (1.3,10.5) node[right] {\scriptsize $\bm{v}$};
\draw (5,2.9) node[below] {\scriptsize $\,\bm{p_2}$};
\draw[blue] (5,3) -- (23,1);
\draw[blue] (6,10) -- (24,8);
\draw[blue] (7,17) -- (25,15);
\draw[blue] (8,24) -- (26,22);
\draw[blue] (9,31) -- (27,29);
\draw[blue] (5,3) -- (9,31);
\draw[blue] (14,2) -- (18,30);
\draw[blue] (23,1) -- (27,29);
\draw[thick,blue,->] (5,3) -- (14,2);
\draw (9.5,2.4) node[above] {\scriptsize $\bm{u}$};
\draw[thick,blue,->] (5,3) -- (6,10);
\foreach \x in
{(1,7),(2,14),(3,21),(4,28),(5,3),(6,10),(7,17),(8,24),(9,31),(10,6),(11,13),(12,20),(13,27),(14,2),(15,9),(16,16),(17,23),(18,30),(19,5),(20,12),(21,19),(22,26),(23,1),(24,8),(25,15),(26,22),(27,29),(28,4),(29,11),(30,18),(31,25)}
{
    \fill \x circle (0.4);
}
    \draw[->] (0,0) -- (34,0);
    \draw[->] (0,0) -- (0,34);
\foreach \x in {(5,3),(6,10),(7,17),(8,24),(9,31),(14,2),(15,9),(16,16),(17,23),(18,30),(23,1),(24,8),(25,15),(26,22),(27,29)}
{
    \fill [white] \x circle (0.2);
}
\end{tikzpicture}
\caption{Plots of the permutations $\perm_5 \in S_{24}$ and $\perm_6 \in S_{31}$, showing their construction from two interlocking grids of lattice points.}
\label{fig:sigma5 and sigma6}
\end{figure}

For odd $k$, define the four vectors
$$
\bm{p_1} \:=\: (1,k+2)
, \quad
\bm{p_2} \:=\: \left(\tfrac{k+3}{2},\tfrac{k+1}{2}\right)
, \quad
\bm{u} \:=\: (k+2,-1)
, \quad
\bm{v} \:=\: (1,k+2).
$$
Now let
\begin{align*}
  \gridlattice_1 & \;=\; \left\{ \bm{p_1} + q\bm{u} + r\bm{v} \;:\; 0\leqs q\leqs \tfrac{k+1}{2},\: 0\leqs r\leqs \tfrac{k-1}{2}  \right\}, \\[3pt]
  \gridlattice_2 & \;=\; \left\{ \bm{p_2} + q\bm{u} + r\bm{v} \;:\; 0\leqs q\leqs \tfrac{k-1}{2},\: 0\leqs r\leqs \tfrac{k+1}{2}  \right\}
\end{align*}
be two finite grids of lattice points.

We claim that $\gridlattice_1\cup\gridlattice_2$ is the plot of $\perm_k$.
Indeed,
for $1\leqs i\leqs m(k)$, if
$$
q \;=\; \floor{(i-1)/(k+2)} \quad \text{and} \quad r \;=\; (i-1)\!\!\!\mod(k+2),
$$
then
$$
  (i,\perm_k(i)) \;=\;
  \begin{cases}
  \bm{p_1} + q\bm{u} + r\bm{v}, & \text{ if $r\leqs\frac{k-1}{2}$, and}\\[2pt]
  \bm{p_2} + q\bm{u} + \left(r-\frac{k+1}{2}\right)\bm{v}, & \text{ otherwise.}
  \end{cases}
  $$

Similarly,
for even $k$, define the four vectors
$$
\bm{p_1} \:=\: (1,k+1)
, \quad
\bm{p_2} \:=\: \left(\tfrac{k}{2}+2,\tfrac{k}{2}\right)
, \quad
\bm{u} \:=\: (k+3,-1)
, \quad
\bm{v} \:=\: (1,k+1),
$$
and let
\begin{align*}
  \gridlattice_1 & \;=\; \left\{ \bm{p_1} + q\bm{u} + r\bm{v} \;:\; 0\leqs q\leqs \tfrac{k}{2},\: 0\leqs r\leqs \tfrac{k}{2}  \right\}, \\[3pt]
  \gridlattice_2 & \;=\; \left\{ \bm{p_2} + q\bm{u} + r\bm{v} \;:\; 0\leqs q\leqs \tfrac{k}{2}-1,\: 0\leqs r\leqs \tfrac{k}{2}+1  \right\} .
\end{align*}
Now, for $1\leqs i\leqs m(k)$, if
$$
q \;=\; \floor{(i-1)/(k+3)} \quad \text{and} \quad r \;=\; (i-1)\!\!\!\mod(k+3),
$$
then
$$
  (i,\perm_k(i)) \;=\;
  \begin{cases}
  \bm{p_1} + q\bm{u} + r\bm{v}, & \text{ if $r\leqs\frac{k}{2}$, and}\\[2pt]
  \bm{p_2} + q\bm{u} + \left(r-\frac{k}{2}-1\right)\bm{v}, & \text{ otherwise.}
  \end{cases}
  $$

  Thus, for each $k$, the plot of $\perm_k$ is given by $\gridlattice_1\cup\gridlattice_2$.
  We now use this characterization to bound $\minprol(k)$ from above.

\begin{thm}\label{thm:sigmak is kprolific}
For each positive integer $k$, the permutation $\perm_k$ is $k$-prolific.
\end{thm}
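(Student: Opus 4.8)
The plan is to invoke Theorem~\ref{thm:kprolific iff large breadth}, which reduces the claim to showing $\br(\perm_k)\geqs k+2$; equivalently, that every two distinct points of the plot $\gridlattice_1\cup\gridlattice_2$ lie at $L_1$ distance at least $k+2$. Since $\gridlattice_1$ and $\gridlattice_2$ are finite pieces of the two cosets $\bm{p_1}+\Lambda$ and $\bm{p_2}+\Lambda$ of the lattice $\Lambda=\{q\bm{u}+r\bm{v}:q,r\in\mathbb{Z}\}$, it is enough to bound below the $L_1$ norm of every nonzero difference of coset points. These come in two flavours: \emph{within-grid} differences $q\bm{u}+r\bm{v}$ with $(q,r)\neq(0,0)$, and \emph{across-grid} differences $\bm{w}+q\bm{u}+r\bm{v}$ with $q,r\in\mathbb{Z}$, where $\bm{w}=\bm{p_2}-\bm{p_1}$.

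First I would record the single arithmetic fact that organizes everything: in both parities a direct computation gives $\bm{w}=\tfrac12(\bm{u}-\bm{v})$. Consequently an across-grid difference equals $(q+\tfrac12)\bm{u}+(r-\tfrac12)\bm{v}$, so its \emph{double} is $(2q+1)\bm{u}+(2r-1)\bm{v}$, a lattice vector both of whose coefficients are odd. This is the key reduction: it turns the across-grid estimate into a within-lattice estimate restricted to vectors with odd coordinates, and halving the resulting bound will deliver exactly $k+2$.

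The generic tool is a reverse–triangle–inequality estimate obtained by writing out the coordinates of $c\bm{u}+d\bm{v}$ and discarding cross terms: for odd $k$ this yields $\|c\bm{u}+d\bm{v}\|_1\geqs(|c|+|d|)(k+1)$, and for even $k$ it yields $\|c\bm{u}+d\bm{v}\|_1\geqs|c|(k+2)+|d|\,k$ (the asymmetry reflecting that $\bm{u},\bm{v}$ are orthogonal of equal $L_1$ length only when $k$ is odd). The within-grid case then follows by applying this estimate when $|q|+|r|\geqs2$, and inspecting directly the cases where one of $q,r$ vanishes; one gets $\|q\bm{u}+r\bm{v}\|_1\geqs k+2$ throughout. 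For the across-grid case I apply the estimate to $c\bm{u}+d\bm{v}$ with $c=2q+1$ and $d=2r-1$ odd, the target being $\|c\bm{u}+d\bm{v}\|_1\geqs 2(k+2)$. Because $c,d$ are odd we have $|c|,|d|\geqs1$, and the estimate already closes every case with $|c|+|d|\geqs4$.

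The hard part — and the only place the crude estimate is inadequate — is the borderline case $|c|=|d|=1$, i.e.\ the four nearest half-lattice neighbours $\pm\tfrac12(\bm{u}\pm\bm{v})$. Here the reverse–triangle bound undershoots $2(k+2)$ by a fixed margin, so these four vectors must be evaluated by hand; doing so shows that each has $L_1$ norm exactly $2(k+2)$, which is precisely the statement that $\bm{w}$ sits at an ``$L_1$ deep hole'' at distance exactly $k+2$ from $\Lambda$. Assembling the two cases shows every nonzero difference has $L_1$ norm at least $k+2$, whence $\br(\perm_k)\geqs k+2$ and Theorem~\ref{thm:kprolific iff large breadth} completes the proof. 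I would also note in passing that, since $\bm{w}$ (both parities) and $\bm{v}$ (when $k$ is even) attain norm exactly $k+2$, the breadth is in fact equal to $k+2$, so the permuted diamond packing associated with $\perm_k$ is as tight as possible.
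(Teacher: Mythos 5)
Your proof is correct, and it shares the paper's skeleton---reduction to the breadth criterion via Theorem~\ref{thm:kprolific iff large breadth}, then splitting pairs of points into within-grid and across-grid cases---but your handling of the across-grid case is genuinely different from the paper's. The paper writes the across-grid $L_1$ distance explicitly as $|qa-c+r|+|rb+d-q|$, with offset constants $(c,d)$ depending on the parity of $k$, and disposes of it by a five-way case analysis on the ranges of $q$ and $r$, in each case showing that a single coordinate already reaches $k+2$. You instead use the identity $\bm{p_2}-\bm{p_1}=\tfrac12(\bm{u}-\bm{v})$, which indeed holds in both parities (both sides equal $\left(\tfrac{k+1}{2},-\tfrac{k+3}{2}\right)$ for odd $k$ and $\left(\tfrac{k}{2}+1,-\tfrac{k}{2}-1\right)$ for even $k$), so that doubled across-grid differences are precisely the lattice vectors $c\bm{u}+d\bm{v}$ with both coefficients odd; your reverse-triangle estimates ($(|c|+|d|)(k+1)$ for odd $k$, $|c|(k+2)+|d|k$ for even $k$, both of which I have checked) then dispose of every case with $|c|+|d|\geqs 4$, and since $|c|+|d|$ is even, only the four vectors $\pm\bm{u}\pm\bm{v}$ remain, each of which has $L_1$ norm exactly $2(k+2)$ in both parities. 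What your route buys is conceptual clarity and economy of cases: it identifies $\bm{p_2}$ as sitting at an $L_1$ ``deep hole'' of the lattice generated by $\bm{u}$ and $\bm{v}$, and it makes transparent both why the bound is tight (the breadth is exactly $k+2$, matching the paper's closing remark) and hence why the construction cannot be slackened. What the paper's version buys in exchange is uniformity and self-containment: the within- and across-grid cases are dispatched by the same mechanical sign analysis on explicit coordinates, with no need for the half-lattice observation or the parity argument on $|c|+|d|$. Your within-grid treatment (estimate when $|q|+|r|\geqs 2$, direct inspection when one coefficient vanishes) is essentially equivalent to the paper's, and the overlap between your two subcases is harmless since together they cover all nonzero $(q,r)$.
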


\begin{proof}
By Theorem~\ref{thm:kprolific iff large breadth}, we need only show that the breadth of $\perm_k$ is at least $k+2$.
Let $x$ and $y$ be distinct points of $\perm_k$.

Recall that the points in the plot of $\perm_k$ are partitioned into two sets, $\gridlattice_1$ and $\gridlattice_2$. If $x$ and~$y$ both lie in the same set, then their positions in the plot differ by a nonzero integer linear combination $q\bm{u} + r\bm{v}$. Thus, the $L_1$ distance between these points is given by
$$
\dwk(x,y) \;=\; \big|qa+r\big| \:+\: \big|rb-q\big|,
$$
where
$$
(a,b) \;=\; \begin{cases}
(k+2,k+2) & \text{ if $k$ is odd, and}\\
(k+3,k+1) & \text{ if $k$ is even.}
\end{cases}
$$

If $q = 0$, then $|r| \geqs 1$, because $x \neq y$. In that case, $\dwk(x,y) = |r|+|rb| \geqs 1 + b$.
Similarly, if $r=0$, then $|q| \geqs 1$ and $\dwk(x,y) = |qa|+|q| \geqs a + 1$.
Suppose now that both $|q|$ and $|r|$ are nonzero. Without loss of generality, we may assume that $r\geqs1$.
If $q\geqs1$, then $\dwk(x,y)\geqs|qa+r|\geqs a+1$.
If, on the other hand, $q\leqs-1$, then $\dwk(x,y)\geqs|rb-q|\geqs b+1$.
In each case, we have $\dwk(x,y)\geqs k+2$.

Now suppose, without loss of generality, that $x \in \gridlattice_1$ and $y \in \gridlattice_2$.
In this case, their positions in the plot differ by a vector $\bm{p_1} - \bm{p_2} + q \bm{u} + r \bm{v}$, for some integers $q$ and $r$.
Thus, the $L_1$ distance between these points is given by
$$
\dwk(x,y) \;=\; \big|qa-c+r\big| \:+\: \big|rb+d-q\big|,
$$
where
$$
(c,d) \;=\; \begin{cases}
\left(\tfrac{k+1}{2},\tfrac{k+3}{2}\right) & \text{ if $k$ is odd, and}\\
\left(\tfrac{k}{2}+1,\tfrac{k}{2}+1\right) & \text{ if $k$ is even.}
\end{cases}
$$

The possible distances can be
partitioned into the following five cases:
$$
\begin{array}{rcll}
  \dwk(x,y) & = & k + 2,                         & \quad \text{if $0\leqs q\leqs1$ and $-1\leqs r\leqs0$,} \\[3pt]
  \dwk(x,y) & \geqs & |qa-c+r| \;\geqs\; 2a-c-1, & \quad \text{if $q\geqs2$ and $r\geqs-1$,} \\[3pt]
  \dwk(x,y) & \geqs & |rb+d-q| \;\geqs\; b+d-1,  & \quad \text{if $q\leqs1$ and $r\geqs1$,}  \\[3pt]
  \dwk(x,y) & \geqs & |qa-c+r| \;\geqs\; a+c,    & \quad \text{if $q\leqs-1$ and $r\leqs0$,} \\[3pt]
  \dwk(x,y) & \geqs & |rb+d-q| \;\geqs\; 2b-d,   & \quad \text{if $q\geqs0$ and $r\leqs-2$.}
\end{array}
$$
Again, in each case, we have $\dwk(x,y)\geqs k+2$.

Therefore, the permutation $\perm_k$ has breadth $k+2$, and so, by Theorem~\ref{thm:kprolific iff large breadth}, it is $k$-prolific.
\end{proof}

This construction determines an upper bound
on the size of the smallest $k$-prolific permutation.
Together with the lower bound of
Theorem~\ref{thm:minprol lower bound}, it establishes the value of $\minprol(k)$ exactly.

\begin{cor}\label{cor:minprol}
  For each positive integer $k$,
  the smallest $k$-prolific permutations have size $\ceil{k^2/2+2k+1}$.
\end{cor}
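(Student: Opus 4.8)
The plan is to combine the two main results of Sections~\ref{section:lower bound} and~\ref{section:constructions}, which respectively supply matching lower and upper bounds on $\minprol(k)$. Recall that $\minprol(k)$ denotes the size of the smallest $k$-prolific permutation and that $m(k) = \ceil{k^2/2+2k+1}$. The lower bound $\minprol(k) \geqs \ceil{k^2/2+2k+1}$ is exactly the content of Theorem~\ref{thm:minprol lower bound}, so half the work is already done. The remaining task is to exhibit a $k$-prolific permutation achieving this size, thereby furnishing the reverse inequality.

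For the upper bound, I would invoke the explicit construction of Section~\ref{section:constructions}. By Definition~\ref{defn:a short prolific permutation}, the permutation $\perm_k$ has size exactly $m(k) = \ceil{k^2/2+2k+1}$, and Theorem~\ref{thm:sigmak is kprolific} guarantees that $\perm_k$ is $k$-prolific. Hence a $k$-prolific permutation of size $\ceil{k^2/2+2k+1}$ exists, giving $\minprol(k) \leqs \ceil{k^2/2+2k+1}$.

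Putting the two inequalities together yields
$$
\ceil{k^2/2+2k+1} \;\leqs\; \minprol(k) \;\leqs\; \ceil{k^2/2+2k+1},
$$
so $\minprol(k) = \ceil{k^2/2+2k+1}$, which is precisely the claim that the smallest $k$-prolific permutations have size $\ceil{k^2/2+2k+1}$.

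Since both the lower bound (Theorem~\ref{thm:minprol lower bound}) and the existence of an optimal construction (Theorem~\ref{thm:sigmak is kprolific}) have already been established, there is essentially no obstacle remaining: the corollary is a direct synthesis of these two theorems. The only point worth double-checking is that the construction $\perm_k$ genuinely has size $m(k)$ matching the lower bound function—but this is immediate from Definition~\ref{defn:a short prolific permutation}, where $\perm_k$ is defined on the index set $i = 1,\ldots,m(k)$. The difficult packing-area estimate that produces the lower bound, and the breadth computation verifying that $\perm_k$ is $k$-prolific, were the substantive steps, and both lie upstream of this corollary.
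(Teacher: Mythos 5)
Your proposal is correct and matches the paper's own (implicit) argument exactly: the paper derives Corollary~\ref{cor:minprol} by combining the lower bound of Theorem~\ref{thm:minprol lower bound} with the upper bound supplied by the construction $\perm_k$ of size $m(k)=\ceil{k^2/2+2k+1}$, which Theorem~\ref{thm:sigmak is kprolific} shows to be $k$-prolific. Nothing is missing.
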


\begin{cor}
  For each integer $d>2$,
  if $\DDD$ is a diamond whose diagonals have length $d$, then
  the smallest nontrivial permuted packings with tile $\DDD$ have size
  $\ceil{d^2/2-1}$.
\end{cor}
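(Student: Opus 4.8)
The plan is to reduce the statement to Corollary~\ref{cor:minprol} via the bijection between prolific permutations and permuted diamond packings, after a change of variables. Since $d>2$ is an integer, set $k := d-2$, so that $k\geqs1$ and $\DDD$ is a diamond whose diagonal has length $k+2$. As in the proof of the proposition relating $k$-prolific permutations to permuted packings, centering a translate of $\DDD$ (an $L_1$ ball of radius $d/2$) at each point $(i,\perm(i))$ yields a family with mutually disjoint interiors exactly when the minimum $L_1$ distance between the points is at least $d$, that is, when $\br(\perm)\geqs d=k+2$. So a permuted packing of $n$ translates of $\DDD$ is precisely the data of a permutation $\perm\in S_n$ with $\br(\perm)\geqs k+2$, and I read ``nontrivial'' as requiring $n\geqs2$.

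It then suffices to show that the smallest such $\perm$ with $n\geqs2$ has size $\minprol(k)$. For existence, Corollary~\ref{cor:minprol} (realized concretely by $\perm_k$ from Theorem~\ref{thm:sigmak is kprolific}) provides a $k$-prolific permutation of size $\minprol(k)=\ceil{k^2/2+2k+1}$, and since this value is at least $4$ for $d\geqs3$, the associated packing is nontrivial. For the lower bound I would rule out every $n$ with $2\leqs n<\minprol(k)$, splitting on size. When $n>k$, the characterization of Theorem~\ref{thm:kprolific iff large breadth} turns the condition $\br(\perm)\geqs k+2$ into $k$-prolificity, so Corollary~\ref{cor:minprol} forbids $n<\minprol(k)$. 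When $2\leqs n\leqs k$, the prolific machinery is vacuous, but a direct bound suffices: the entries with values $1$ and $2$ occupy positions $i\neq j$ with $\dw(i,j)=|i-j|+1\leqs n\leqs k<d$, whence $\br(\perm)<d$ and no valid packing can exist.

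Finally I would rewrite $\minprol(k)$ in terms of $d$: substituting $k=d-2$ gives $k^2/2+2k+1=d^2/2-1$ as an identity of real numbers, so $\minprol(k)=\ceil{k^2/2+2k+1}=\ceil{d^2/2-1}$, as required. I expect no real obstacle, since all the hard content lies in Corollary~\ref{cor:minprol} and the packing bijection; the only point needing attention is the range $2\leqs n\leqs k$, where the $k$-prolific framework does not apply and the elementary inequality $\br(\perm)\leqs n$ must be used instead.
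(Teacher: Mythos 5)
Your proposal is correct and takes essentially the same route the paper intends: the corollary is simply Corollary~\ref{cor:minprol} rewritten through the packing bijection under the change of variables $d=k+2$, using the identity $\ceil{k^2/2+2k+1}=\ceil{d^2/2-1}$, and the paper states it without any separate proof. Your additional step ruling out packings of size $2\leqs n\leqs k$ (where $k$-prolificity is undefined and Theorem~\ref{thm:kprolific iff large breadth} cannot be invoked) via the elementary bound $\br(\perm)\leqs n$ is a genuine bit of care that the paper's implicit treatment glosses over, and it is handled correctly.
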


The first few terms of this sequence are $4, 7, 12, 17, 24, 31, 40, 49, 60, 71$.
It is sequence \href{https://oeis.org/A074148}{A074148} in~\cite{OEIS}.

Clearly, any symmetry of $\perm_k$ is $k$-prolific. However, for even $k\geqs6$, these permutations are not the only $k$-prolific permutations of minimal size.

See Figure~\ref{fig:6prolific perm} for an illustration of another $6$-prolific permutation of size $31$.
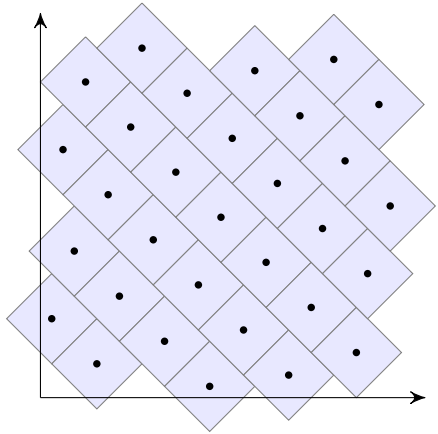
\begin{figure}[htbp]
\begin{tikzpicture}[scale=.15,>=stealthp] 
\foreach \x in
{(1,7),(2,22),(3,13),(4,28),(5,3),(6,18),(7,9),(8,24),(9,31),(10,14),(11,5),(12,20),(13,27),(14,10),(15,1),(16,16),(17,23),(18,6),(19,29),(20,12),(21,19),(22,2),(23,25),(24,8),(25,15),(26,30),(27,21),(28,4),(29,11),(30,26),(31,17)}
{
    \fill[blue!9] \x ++(0,4) -- ++(4,-4) -- ++(-4,-4) -- +(-4,4);
    \fill \x circle (0.33);
    \draw[gray] \x ++(0,4) -- ++(4,-4) -- ++(-4,-4) -- ++(-4,4) -- ++(4,4);
}
    \draw[->] (0,0) -- (34,0);
    \draw[->] (0,0) -- (0,34);
\end{tikzpicture}
\caption{An alternative 6-prolific permutation in $S_{31}$.}
\label{fig:6prolific perm}
\end{figure}

It is not immediately obvious that $k$-prolific permutations exist of every size greater than or equal to $\minprol(k)$.
We conclude this section by briefly presenting a construction that demonstrates that this is, in fact, the case.

\begin{thm}\label{thm:kprolific from minprol}
  There is a $k$-prolific permutation of every size greater than or equal to $\ceil{k^2/2+2k+1}$.
\end{thm}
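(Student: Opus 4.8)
The plan is to combine the breadth characterization of Theorem~\ref{thm:kprolific iff large breadth} with the permuted-packing viewpoint and to argue by induction on the size~$n$. By that theorem it suffices to exhibit, for every $n\geqs\ceil{k^2/2+2k+1}=m(k)$, a permutation of size~$n$ whose breadth is at least $k+2$; equivalently, a permuted packing of $n$ diamonds of diagonal $k+2$. The base case $n=m(k)$ is supplied by $\perm_k$ (Theorem~\ref{thm:sigmak is kprolific}). The heart of the argument is therefore an insertion step: given a $k$-prolific $\perm\in S_n$ with $n\geqs m(k)$, produce a $k$-prolific permutation of size $n+1$.

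For the insertion I would work with the diamond packing and use a global ``stretch'' rather than a naive local insertion. Concretely, I would choose an index $t\in\{0,1,\ldots,n\}$ and a value $\ell\in\{0,1,\ldots,n\}$, insert an empty column immediately after column~$t$ and an empty row immediately after value~$\ell$ (shifting the columns to the right of~$t$ and the values above~$\ell$ each by one), and place the new point at the intersection $(t+1,\ell+1)$ of the freshly created empty column and empty row. The first thing to check is that this operation never decreases any pairwise $L_1$ distance among the old points: separating the plot along a vertical and a horizontal line can only move two old points farther apart or leave them fixed, so every old pair still realises distance at least $k+2$. Thus the only thing at stake is the distance from the new point to the old ones.

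The main obstacle, and the crux of the proof, is to show that $t$ and $\ell$ can always be chosen so that the new point lies at $L_1$ distance at least $k+2$ from every stretched old point --- that is, so that the inserted diamond does not overlap any existing one. I would establish this by a counting argument in the spirit of the lower-bound proof: the bounding box enlarges from $n\times n$ to $(n+1)\times(n+1)$, gaining area $2n+1$, and since $n\geqs m(k)$ this gain already exceeds the area $\tfrac12(k+2)^2$ of a single diamond, so on area grounds there is always room for one more tile. To turn this into an actual admissible position $(t+1,\ell+1)$, I would count how many of the $(n+1)^2$ candidate cross-positions are ``blocked'', i.e.\ lie within distance $k+1$ of some old point after stretching; using that the old points are themselves pairwise at distance at least $k+2$ to control the overlap of the blocked regions, I would argue that the blocked positions cannot exhaust all candidates, so an admissible cross survives. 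I expect this counting to be the delicate point, and to be tightest exactly at $n=m(k)$, where the packing is essentially saturated; there one may instead invoke the explicit grid description of $\perm_k$ (its two interlocking sublattices of minimum $L_1$-distance $k+2$) to locate an empty cross by hand, reserving the clean counting argument for the more generous range $n>m(k)$.

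With the insertion step in place, induction on~$n$ starting from $\perm_k$ then produces a $k$-prolific permutation of every size $n\geqs m(k)$, as asserted. As an alternative to a uniform insertion step, one could instead give, for each target size~$n$, a direct modular construction $i\mapsto ci \bmod (n+1)$ with a multiplier~$c$ chosen so that the associated sublattice has minimum $L_1$-norm at least $k+2$, generalising the definition of $\perm_k$; on this route the work shifts to verifying that such a~$c$ exists for every $n\geqs m(k)$, which is again most delicate in the saturated regime near the minimum.
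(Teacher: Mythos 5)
Your overall strategy --- reduce to breadth via Theorem~\ref{thm:kprolific iff large breadth}, start from $\perm_k$, and repeatedly insert one new point after ``stretching'' the plot along an empty row and column --- is the same as the paper's, and your observation that stretching never decreases distances between old points is correct. The difference is that the paper inserts at one \emph{explicitly specified} location (a new first entry placed immediately above the $(k+2)$th entry for odd $k$, the $(k+3)$th for even $k$) into the explicitly known family $\perm_k^{+j}$, so that preservation of breadth is a concrete, if tedious, verification. You instead assert a general insertion lemma --- that \emph{every} $k$-prolific permutation of size $n\geqs m(k)$ admits some admissible crossing $(t+1,\ell+1)$ --- and this is where your proof has a genuine gap.

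The counting argument you sketch cannot close in exactly the regime the theorem is about. A candidate crossing is blocked by an old point $(i,v)$ when the stretched distances satisfy $d_h+d_v\leqs k+1$; since at most two values of $t$ realise each $d_h=a$ and at most two values of $\ell$ realise each $d_v=b$, each old point blocks at most $4\cdot\tfrac{k(k+1)}{2}=2k(k+1)$ of the $(n+1)^2$ candidates. A union bound therefore needs $(n+1)^2>2k(k+1)\,n$, i.e.\ $n\gtrsim 2k^2+2k$, whereas the critical sizes are $n\approx k^2/2$: the count is short by a factor of about $4$, and not only at $n=m(k)$ but throughout the range $m(k)\leqs n\lesssim 2k^2+2k$. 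Hoping that the pairwise separation of the old points ``controls the overlap'' works against you: separation limits how much the blocked regions can coincide, pushing the union \emph{toward} the sum, which is the bad case. Your fallback (locating a cross by hand in the grid structure of $\perm_k$) covers only the single size $n=m(k)$; for the many sizes above it where the union bound still fails, the permutation produced by earlier insertions is no longer $\perm_k$ and you have no structure to appeal to --- unless you track an explicit family through the induction, which is precisely what the paper does. The area heuristic (the box gains area $2n+1$, more than one diamond) is not a substitute: sufficient free area never guarantees a placement in packing problems. Finally, your alternative modular route fails outright: for $k=1$ and $n=5$ the modulus is $6$, whose only units are $1$ and $5$, giving the two monotone permutations of breadth $2$, so no suitable multiplier $c$ exists even though $1$-prolific permutations of size $5$ do.
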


\begin{proof}
We construct a $k$-prolific permutation, $\perm_k^{+j}$, of size $\minprol(k)+j$, for each $k\geqs1$ and $j\geqs0$.

Let $\perm_k^{+0}=\perm_k$. For $j\geqs0$, the permutation $\perm_k^{+j+1}$ is constructed by inserting a new first entry immediately above
the $(k+2)$th entry of $\perm_k^{+j}$, if $k$ is odd, or immediately above
the $(k+3)$th entry of $\perm_k^{+j}$, if $k$ is even.
See Figure~\ref{fig:growing perms} for an illustration.

We leave as an exercise for the reader
the rather tedious details of the proof that
this construction never leads to a reduction in the breadth of the permutation.

Furthermore, it can be shown that the breadth eventually increases:
if $k$ is odd, then
$\perm_k^{+k+2}$ is $(k+1)$-prolific, and, if $k$ is even, then
$\perm_k^{+k+3}$ is $(k+1)$-prolific.
\end{proof}

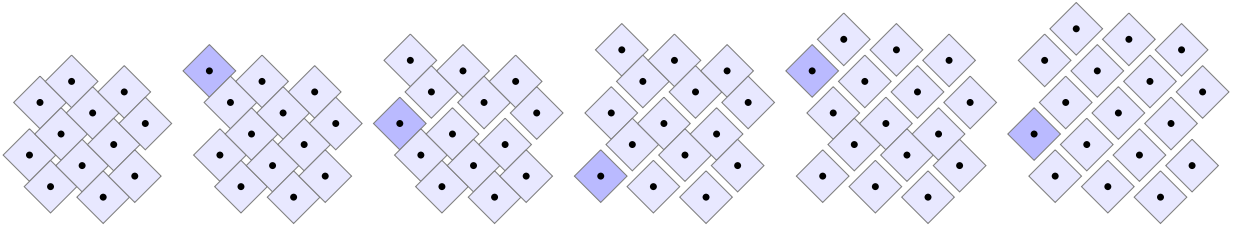
\begin{figure}[htbp]
\begin{tikzpicture}[scale=.14]
\foreach \x in
{(1,5),(2,10),(3,2),(4,7),(5,12),(6,4),(7,9),(8,1),(9,6),(10,11),(11,3),(12,8)}
{
    \fill[blue!9] \x ++(0,2.5) -- ++(2.5,-2.5) -- ++(-2.5,-2.5) -- +(-2.5,2.5);
    \fill \x circle (0.33);
    \draw[gray] \x ++(0,2.5) -- ++(2.5,-2.5) -- ++(-2.5,-2.5) -- ++(-2.5,2.5) -- ++(2.5,2.5);
}
\end{tikzpicture}
\begin{tikzpicture}[scale=.14]
\foreach \x in
{(1,13)}
{
    \fill[blue!27] \x ++(0,2.5) -- ++(2.5,-2.5) -- ++(-2.5,-2.5) -- +(-2.5,2.5);
    \fill \x circle (0.33);
    \draw[gray] \x ++(0,2.5) -- ++(2.5,-2.5) -- ++(-2.5,-2.5) -- ++(-2.5,2.5) -- ++(2.5,2.5);
}
\foreach \x in
{(2,5),(3,10),(4,2),(5,7),(6,12),(7,4),(8,9),(9,1),(10,6),(11,11),(12,3),(13,8)}
{
    \fill[blue!9] \x ++(0,2.5) -- ++(2.5,-2.5) -- ++(-2.5,-2.5) -- +(-2.5,2.5);
    \fill \x circle (0.33);
    \draw[gray] \x ++(0,2.5) -- ++(2.5,-2.5) -- ++(-2.5,-2.5) -- ++(-2.5,2.5) -- ++(2.5,2.5);
}
\end{tikzpicture}
\begin{tikzpicture}[scale=.14]
\foreach \x in
{(1,8)}
{
    \fill[blue!27] \x ++(0,2.5) -- ++(2.5,-2.5) -- ++(-2.5,-2.5) -- +(-2.5,2.5);
    \fill \x circle (0.33);
    \draw[gray] \x ++(0,2.5) -- ++(2.5,-2.5) -- ++(-2.5,-2.5) -- ++(-2.5,2.5) -- ++(2.5,2.5);
}
\foreach \x in
{(2,14),(3,5),(4,11),(5,2),(6,7),(7,13),(8,4),(9,10),(10,1),(11,6),(12,12),(13,3),(14,9)}
{
    \fill[blue!9] \x ++(0,2.5) -- ++(2.5,-2.5) -- ++(-2.5,-2.5) -- +(-2.5,2.5);
    \fill \x circle (0.33);
    \draw[gray] \x ++(0,2.5) -- ++(2.5,-2.5) -- ++(-2.5,-2.5) -- ++(-2.5,2.5) -- ++(2.5,2.5);
}
\end{tikzpicture}
\begin{tikzpicture}[scale=.14]
\foreach \x in
{(1,3)}
{
    \fill[blue!27] \x ++(0,2.5) -- ++(2.5,-2.5) -- ++(-2.5,-2.5) -- +(-2.5,2.5);
    \fill \x circle (0.33);
    \draw[gray] \x ++(0,2.5) -- ++(2.5,-2.5) -- ++(-2.5,-2.5) -- ++(-2.5,2.5) -- ++(2.5,2.5);
}
\foreach \x in
{(2,9),(3,15),(4,6),(5,12),(6,2),(7,8),(8,14),(9,5),(10,11),(11,1),(12,7),(13,13),(14,4),(15,10)}
{
    \fill[blue!9] \x ++(0,2.5) -- ++(2.5,-2.5) -- ++(-2.5,-2.5) -- +(-2.5,2.5);
    \fill \x circle (0.33);
    \draw[gray] \x ++(0,2.5) -- ++(2.5,-2.5) -- ++(-2.5,-2.5) -- ++(-2.5,2.5) -- ++(2.5,2.5);
}
\end{tikzpicture}
\begin{tikzpicture}[scale=.14]
\foreach \x in
{(1,13)}
{
    \fill[blue!27] \x ++(0,2.5) -- ++(2.5,-2.5) -- ++(-2.5,-2.5) -- +(-2.5,2.5);
    \fill \x circle (0.33);
    \draw[gray] \x ++(0,2.5) -- ++(2.5,-2.5) -- ++(-2.5,-2.5) -- ++(-2.5,2.5) -- ++(2.5,2.5);
}
\foreach \x in
{(2,3),(3,9),(4,16),(5,6),(6,12),(7,2),(8,8),(9,15),(10,5),(11,11),(12,1),(13,7),(14,14),(15,4),(16,10)}
{
    \fill[blue!9] \x ++(0,2.5) -- ++(2.5,-2.5) -- ++(-2.5,-2.5) -- +(-2.5,2.5);
    \fill \x circle (0.33);
    \draw[gray] \x ++(0,2.5) -- ++(2.5,-2.5) -- ++(-2.5,-2.5) -- ++(-2.5,2.5) -- ++(2.5,2.5);
}
\end{tikzpicture}
\begin{tikzpicture}[scale=.14]
\foreach \x in
{(1,7)}
{
    \fill[blue!27] \x ++(0,2.5) -- ++(2.5,-2.5) -- ++(-2.5,-2.5) -- +(-2.5,2.5);
    \fill \x circle (0.33);
    \draw[gray] \x ++(0,2.5) -- ++(2.5,-2.5) -- ++(-2.5,-2.5) -- ++(-2.5,2.5) -- ++(2.5,2.5);
}
\foreach \x in
{(2,14),(3,3),(4,10),(5,17),(6,6),(7,13),(8,2),(9,9),(10,16),(11,5),(12,12),(13,1),(14,8),(15,15),(16,4),(17,11)}
{
    \fill[blue!9] \x ++(0,2.5) -- ++(2.5,-2.5) -- ++(-2.5,-2.5) -- +(-2.5,2.5);
    \fill \x circle (0.33);
    \draw[gray] \x ++(0,2.5) -- ++(2.5,-2.5) -- ++(-2.5,-2.5) -- ++(-2.5,2.5) -- ++(2.5,2.5);
}
\end{tikzpicture}
\caption{Plots of the 3-prolific permutations $\perm_3^{+j}$, for $j=0,\ldots,5$; note that $\perm_3^{+5}$ is, in fact, 4-prolific.}
\label{fig:growing perms}
\end{figure}

\section{Directions for further research}

In Section~\ref{section:constructions}, we noted that $\perm_k$ and its symmetries were not necessarily the only $k$-prolific permutations of minimal size.
However, for odd $k$, no additional $k$-prolific permutations of size $\minprol(k)$ are known.
This prompts the following conjecture.

\begin{conj}
  For each odd $k$, the permutation $\perm_k$ (described in Definition~\ref{defn:a short prolific permutation}) and its symmetries are the only $k$-prolific permutations of minimal size.
\end{conj}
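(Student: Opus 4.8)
Via the diamond-packing correspondence, a $k$-prolific permutation of minimal size is precisely a permuted packing of $\minprol(k)=(k+1)(k+3)/2$ diamonds of diagonal $k+2$, and by Corollary~\ref{cor:minprol} no permuted packing of such diamonds of smaller cardinality exists. I would begin by observing that any minimal $k$-prolific permutation has breadth \emph{exactly} $k+2$: a breadth of $k+3$ or more would, by Theorem~\ref{thm:kprolific iff large breadth}, make the permutation $(k+1)$-prolific and hence of size at least $\minprol(k+1)>\minprol(k)$, contradicting minimality. Consequently the associated packing has many pairs of tiles in contact (centres at $L_1$-distance exactly $k+2$), and the task reduces to a rigidity statement: for odd $k$ this contact structure must force the crystalline configuration $\gridlattice_1\cup\gridlattice_2$ of Definition~\ref{defn:a short prolific permutation}, up to the dihedral symmetries of the plot.

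The engine of the proof would be an equality analysis of the lower bound in Theorem~\ref{thm:minprol lower bound}. For odd $k$ that bound was obtained using the \emph{extended} diamonds, and when combined with $n=\minprol(k)$ the area inequality leaves essentially no slack. I would make each inequality in that estimate exact and read off its structural consequences: the overflow of the extended tiles into each margin must attain the stated maximum, which pins down the centres of the boundary diamonds (those lying in the first or last $k$ columns and rows) and forces every extension cell to be occupied, so that no integer point of the bounding box is wasted. The parity fact that three such diamonds cannot meet at an integer point---the very fact that justifies the extensions---is what distinguishes odd $k$ from even $k$, for which Figure~\ref{fig:6prolific perm} exhibits an exceptional minimal packing; I would therefore keep this constraint central throughout.

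With the boundary pinned, I would propagate the rigidity inward. Since the breadth is exactly $k+2$ and the packing is as compact as possible, each interior tile is caged by its already-placed neighbours, and the non-overlap conditions together with the now-exhausted extension cells should admit a unique legal centre for it. I would organise this as an induction over successive diagonal shells of the packing, showing that at each stage the only way to avoid either an overlap or an unfillable gap of positive area is to continue the lattice pattern generated by the vectors $\bm{u}$ and $\bm{v}$, thereby reconstructing $\gridlattice_1$ and $\gridlattice_2$.

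The genuine difficulty---and the reason the statement remains a conjecture---lies in the passage from global area-optimality to local positional rigidity. Area counting controls totals but not individual placements, and a priori a near-optimal packing might redistribute small defects without altering its total area; ruling out every such defect is the analogue of establishing uniqueness of an optimal packing, a notoriously delicate matter. Any viable argument must exploit a feature genuinely special to odd $k$---plausibly the half-integer semidiagonal and the forced extension cells---since the even-$k$ case admits exceptional minimal packings, and so no parity-blind argument can succeed.
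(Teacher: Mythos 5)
The statement you have attempted is a \emph{conjecture} in the paper's final section: the authors offer no proof of it, and your own closing paragraph concedes that the decisive step is missing, so what you have written is a research plan rather than a proof. Your preliminary reductions are correct and worth recording: for odd $k$ one has $\minprol(k)=(k+1)(k+3)/2$ by Corollary~\ref{cor:minprol}, and any minimal $k$-prolific permutation has breadth exactly $k+2$, since breadth at least $k+3$ would make it $(k+1)$-prolific by Theorem~\ref{thm:kprolific iff large breadth} and hence of size at least $\minprol(k+1)>\minprol(k)$. But these facts only restate the setting of the conjecture; they do not advance it.

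The concrete gap is that the engine you propose, namely an ``equality analysis'' of the area bound of Theorem~\ref{thm:minprol lower bound}, cannot start. For odd $k$ that argument yields the real-valued bound $n\geqs\bigl(k^2+8k+1+\sqrt{k^4+2k^2+32k-19}\bigr)/4$, which is approximately $k^2/2+2k+1/2$, and the value $\minprol(k)=k^2/2+2k+3/2$ is obtained only by rounding up to the next half-integer of the correct parity. Consequently, at $n=\minprol(k)$ the packing inequality is \emph{not} tight: it retains slack of roughly one full tile's area (about $k^2/2$), so none of the individual overflow estimates need achieve their maxima, the boundary tiles are not pinned down, and there is no ``exhausted'' configuration from which to propagate rigidity inward. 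The subsequent shell-by-shell induction is therefore unsupported at its base as well as unexecuted, and the passage from near-optimal total area to unique placement of individual tiles---which you rightly identify as the heart of the matter, and which must somehow exploit the odd-$k$ parity obstruction that fails for even $k$ (cf.\ Figure~\ref{fig:6prolific perm})---is exactly the open problem. As written, the proposal leaves the conjecture where the paper leaves it.
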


More generally, we wonder whether it is possible to enumerate and characterize all minimal $k$-prolific permutations.

\begin{question}
  For each $k$, how many distinct $k$-prolific permutations of minimal size are there, and what are they?
\end{question}

Another topic of potential interest concerns the presence of $k$-prolific permutations in specific permutation classes (sets closed downwards in the pattern poset $\PPP$).
For example, there appear to be no $1$-prolific permutations avoiding $132$, and no $2$-prolific permutations avoiding $123$.
This motivates the following question.
\begin{question}
  For each $k$, which principal permutation classes (those avoiding a single pattern) contain $k$-prolific permutations?
\end{question}

In various guises, the enumeration of $1$-prolific permutations (sequence \href{https://oeis.org/A002464}{A002464} in~\cite{OEIS}) has been well-studied ever since Kaplansky's 1944 paper addressing the ``$n$ king problem''~\cite{Kaplansky1944,Kaplansky1945}. Tauraso~\cite{Tauraso2006} presents complete asymptotics.
For large $n$, the proportion of permutations of size $n$ which are $1$-prolific  is
$$
e^{-2} \left( 1 \:-\: \frac{2}{n^2} \:-\: \frac{10}{3n^3} \:-\: \frac{6}{n^4} \:-\: \frac{154}{15n^5} \:+\: O\left(\frac{1}{n^6}\right)  \right) .
$$

However, nothing specific appears to have been published concerning the enumeration of $k$-prolific permutations for larger $k$. 
In a forthcoming paper, Blackburn, Homberger and Winkler establish that the
proportion of permutations of size $n$ which are $k$-prolific is asymptotically $e^{-k^2-k}$ (see~\cite{BHW2017+}).
\begin{question}
  For a given $k>1$, how does the number of $k$-prolific permutations of size $n$ grow with $n$?
\end{question}

The notion of being $k$-prolific can also be transferred to the context of other graded posets, an element of rank $n$ being $k$-prolific if
it has maximally many children of rank $n-k$.
The characterization of the $k$-prolific elements of various combinatorial posets,
perhaps most obviously those relating to the various subgraph orders, may be of interest.

Finally, permuted packings
also invite
further investigation.
In addition to the permuted diamond packings studied here, one might consider permuted packings of other regular tiles.
Permuted packings of axis-parallel squares
appear uninteresting.
On the other hand, permuted circle packings raise some intriguing questions.
See Figure~\ref{fig:permuted circles} for an illustration.
\begin{figure}[htbp]
\raisebox{21pt}
{
\begin{tikzpicture}[scale=.2]
\foreach \x in
{(1,4),(2,8),(3,12),(4,1),(5,5),(6,9),(7,13),(8,2),(9,6),(10,10),(11,14),(12,3),(13,7),(14,11)}
{
    \fill[blue!9] \x circle (2.06155);
}
\draw[help lines] (1,1) grid (14,14);
\foreach \x in
{(1,4),(2,8),(3,12),(4,1),(5,5),(6,9),(7,13),(8,2),(9,6),(10,10),(11,14),(12,3),(13,7),(14,11)}
{
    \fill \x circle (0.275);
    \draw[gray] \x circle (2.06155);
}
\end{tikzpicture}
} 
$\qquad\qquad$
\begin{tikzpicture}[scale=.2]
\foreach \x in
{(1,10),(2,15),(3,4),(4,19),(5,8),(6,13),(7,2),(8,17),(9,6),(10,21),(11,11),(12,1),(13,16),(14,5),(15,20),(16,9),(17,14),(18,3),(19,18),(20,7),(21,12)}
{
    \fill[blue!9] \x circle (2.12133);
}
\draw[help lines] (1,1) grid (21,21);
\foreach \x in
{(1,10),(2,15),(3,4),(4,19),(5,8),(6,13),(7,2),(8,17),(9,6),(10,21),(11,11),(12,1),(13,16),(14,5),(15,20),(16,9),(17,14),(18,3),(19,18),(20,7),(21,12)}
{
    \fill \x circle (0.275);
    \draw[gray] \x circle (2.12133);
}
\end{tikzpicture}
\caption{Minimal permuted packings of circles of diameter $\sqrt{17}$ and $\sqrt{18}+\veps$.}
\label{fig:permuted circles}
\end{figure}
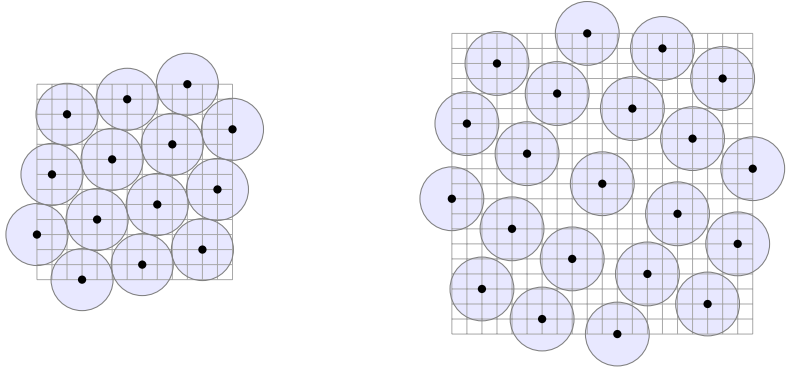

Recall that
the \emph{density} of a packing $\Pi$ relative to a bounded domain $D$ is defined as
$$
d(\Pi,D) \;=\; \frac{\sum_{\TTT\in\Pi}\mu(\TTT\cap D)}{\mu(D)},
$$
where $\mu(X)$ is the area of $X$
(see~\cite{brass:discretegeometry}).

Let us call a permuted packing of minimal cardinality a \emph{minimal} permuted packing.
Among other problems, one that is particularly attractive would be to determine how poor a minimal permuted packing can be, asymptotically as the radius of the circular tiles tends to infinity.
\begin{question}
  What is the value of
  $$
  \liminf_{\rho\to\infty} d(\Pi_\rho,[1,n_\rho]^2),
  $$
  where $\Pi_\rho$ is a minimal permuted packing of circles of radius $\rho$, and $n_\rho$ is the number of circles in such a packing?
\end{question}

Similar questions might be asked about permuted packings of regular hexagons.

\section*{Acknowledgements}
The authors are grateful for the feedback from the referees, which led to improvements in the presentation.
The first author would like to thank Judy and Iska Routamaa for their hospitality and encouragement
when he was carrying out the research for this paper.

\bibliography{prolific}
\bibliographystyle{acm}

\end{document}